\newcommand\AAA{\mathbb{A}}
\newcommand\CC{\mathbb{C}}
\newcommand\FF{\mathbb{F}}
\newcommand\PP{\mathbb{P}}
\newcommand\ZZ{\mathbb{Z}}
\newcommand\QQ{\mathbb{Q}}
\newcommand\RR{\mathbb{R}}
\newcommand\Qbar{{\overline{\QQ}}}
\newcommand\uu{\mathbf{u}}
\newcommand\vv{\mathbf{v}}
\newcommand\ww{\mathbf{w}}
\newcommand\xx{\mathbf{x}}
\newcommand\yy{\mathbf{y}}
\newcommand\zz{\mathbf{z}}
\newcommand\dd{\,\mathrm{d}}
\newcommand{\OO}{\mathcal{O}}
\newcommand{\Xm}{\mathfrak{X}}
\DeclareMathOperator\Pic{Pic}
\DeclareMathOperator\vol{vol}
\DeclareMathOperator\Gal{Gal}
\DeclareMathOperator\Symm{Sym}
\DeclareMathOperator\Hilbb{Hilb}
\newcommand\Sym[3]{\Symm^{#1}(\PP^{#2}_{#3})}
\newcommand\Hilb[3]{\Hilbb^{#1}(\PP^{#2}_{#3})}
\DeclareMathOperator\Proj{Proj}
\DeclareMathOperator\Spec{Spec}
\DeclareMathOperator\id{id}
\DeclareMathOperator\Char{char}
\newcommand\reg{\mathrm{reg}}
\newcommand\tX{{\widetilde{X}}}
\newcommand\tH{{\widetilde{H}}}
\newtheorem{theorem}{Theorem}
\newtheorem{lemma}[theorem]{Lemma}
\newtheorem{prop}[theorem]{Proposition}
\theoremstyle{definition}
\newtheorem{definition}[theorem]{Definition}
\newtheorem{remark}[theorem]{Remark}
\numberwithin{theorem}{section}
\numberwithin{equation}{section}
\begin{document}

\title[Manin's conjecture for the chordal cubic fourfold]
{Manin's conjecture for the\\chordal cubic fourfold}

\author{Ulrich Derenthal}

\address{Institut f\"ur Algebra, Zahlentheorie und Diskrete Mathematik, Leibniz Universit\"at Hannover, Welfengarten 1, 30167 Hannover, Germany}

\email{derenthal@math.uni-hannover.de}

\date{April 22, 2025}

\begin{abstract}
  We prove the thin set version of Manin's conjecture for the chordal
  (or: determinantal) cubic fourfold, which is the secant variety of
  the Veronese surface.  We reduce this counting problem to a result
  of Schmidt for quadratic points in the projective plane by showing
  that the chordal cubic fourfold is isomorphic to the symmetric
  square of the projective plane over the rational numbers.
\end{abstract}

\subjclass[2020]{11D25 (11D45, 14G05)}

%
%

\maketitle

\tableofcontents

\section{Introduction}

Manin's conjecture \cite{MR974910} predicts the asymptotic behavior of
the number of rational points of bounded height on (almost) Fano
varieties over number fields. The case of cubic hypersurfaces over
$\QQ$ might be the most intriguing one. Here, Manin's conjecture has
been proved so far only for singular cubic surfaces that are toric or
vector group compactifications \cite{MR1620682,MR1679843,MR3454090}
(using harmonic analysis), some singular cubic surfaces without such a
structure \cite{MR2332351,MR2520769,MR2990624,MR3263143,MR3414304}, a
singular cubic threefold \cite{MR2329549}, a singular cubic fourfold
\cite{MR3198752} (using universal torsors for all these
low-dimensional examples, combined with various analytic techniques),
and all smooth cubic hypersurfaces of dimension $\ge 15$
\cite{MR3605019} (using the circle method). In all these cases, one
obtains an asymptotic formula for the number of rational points of
bounded anticanonical height that agrees with the conjectures of
Manin, Peyre \cite{MR1340296,MR2019019} and Batyrev--Tschinkel
\cite{MR1679843} after removing a closed subset where the rational
points accumulate.

Here, we show that the thin set version \cite{MR2019019,MR4472281} of
the Manin--Peyre conjecture holds for (the minimal desingularization
of) the singular cubic fourfold $X \subset \PP^5_\QQ$ defined by the
cubic form
\begin{equation}\label{eq:cubic}
  P = X_0X_{12}^2+X_1X_{02}^2+X_2X_{01}^2-X_{01}X_{02}X_{12}-4X_0X_1X_2.
\end{equation}
Since
\begin{equation}\label{eq:determinant}
  -2P = \det
  \begin{pmatrix}
    2X_0 & X_{01} & X_{02} \\
    X_{01} & 2X_1 & X_{12} \\
    X_{02} & X_{12} & 2X_2
  \end{pmatrix}
  ,
\end{equation}
$X$ is the \emph{chordal cubic fourfold} or \emph{determinental cubic
  fourfold} described in \cite[\S 4.4]{MR1738215}. It is the secant
variety of the Veronese surface (the image of $\PP^2_\QQ$ under the
Veronese map \eqref{eq:diagonal_map}, which is the two-dimensional
Severi variety as classified in \cite{severi}, \cite{MR773432},
\cite[Theorem~4.7]{MR1234494}) and isomorphic to the symmetric square
of the projective plane (see \cite[Example~4.5]{MR1354269} over $\CC$
and our Proposition~\ref{prop:X_isom_Sym_2_2} over $\QQ$ and
$\ZZ$). The chordal cubic fourfold $X$ plays a special role in the GIT
compactification of the moduli space of cubic fourfolds \cite[\S
6]{MR2507640}, \cite[(2.8)]{MR2680429}, \cite[\S 6]{MR3959097},
\cite[Remark~4.8]{MR4680279}.

We will see (Lemma~\ref{lem:anticanonical_height}) that an
anticanonical height function on $X$ is defined by the third power of
exponential Weil height on $\PP^5(\QQ)$, namely
\begin{equation}\label{eq:height}
  H(\xx) := \max\{|x_0|,|x_1|,|x_2|,|x_{01}|,|x_{02}|,|x_{12}|\}^3
\end{equation}
for $\xx = (x_0:\dots:x_{12}) \in \PP^5(\QQ)$ represented by coprime
integers $x_0,\dots,x_{12}$.

The singular locus of $X$ is the Veronese surface. It is given by the
vanishing of the six $2\times 2$-minors of \eqref{eq:determinant}
(which are the partial derivatives of \eqref{eq:cubic}); to describe
it as a set, the three minors
\begin{equation*}
  \Delta_{ij} =  \frac{\partial P}{\partial X_k} = X_{ij}^2-4X_iX_j
\end{equation*}
(for $\{i,j,k\}=\{0,1,2\}$) are enough (see
Remark~\ref{rem:singular_locus}). 

For any subset $V \subset X(\QQ)$, we consider the counting function
\begin{equation}\label{eq:counting_function}
  N(V,H,B):=\{\xx \in V \mid H(\xx) \le B\}.
\end{equation}
For $\xx=(x_0:\dots:x_{12}) \in X(\QQ)$, the three numbers
$\Delta_{ij}(\xx) = x_{ij}^2-4x_ix_j \in \QQ$ are well-defined up to
multiplication by squares in $\QQ^\times$. Since they cannot have
different signs (see
Lemma~\ref{lem:X(k)_components}\eqref{it:delta_equal}), $X(\QQ)$ is
the disjoint union of
\begin{equation}\label{eq:def_X_0+-}
  \begin{aligned}
    X_0&=\{\xx \in X(\QQ) \colon \text{all $\Delta_{ij}(\xx)$ are squares in
         $\QQ$}\},\\
    X_+&=\{\xx \in X(\QQ) \colon \text{all $\Delta_{ij}(\xx) \ge 0$, but not all are
       squares in $\QQ$}\},\\
    X_-&=\{\xx \in X(\QQ) \colon \text{all $\Delta_{ij}(\xx) \le 0$, but not all vanish}\}.
  \end{aligned}
\end{equation}

\begin{theorem}\label{thm:main}
  Let $X \subset \PP^5_\QQ$ be the cubic fourfold defined by
  (\ref{eq:cubic}) with the anticanonical height function $H$ as in
  (\ref{eq:height}) and the subsets $X_0$, $X_+$, $X_-$ as in
  (\ref{eq:def_X_0+-}). Then there are positive real numbers $c_0$,
  $c_+$, $c_-$ such that
  \begin{align*}
    N(X_0,H,B) &= c_0 B\log B+O(B),\\
    N(X_+,H,B) &= c_+B\log B + O(B (\log B)^{1/2}),\\
    N(X_-,H,B) &= c_-B\log B + O(B (\log B)^{1/2}),
  \end{align*}
  as $B \to \infty$.
\end{theorem}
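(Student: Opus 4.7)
The plan is to reduce the count to asymptotic statements on $\PP^2_\QQ$ via the isomorphism $X\cong\Sym{2}{2}{\QQ}$ of Proposition~\ref{prop:X_isom_Sym_2_2}. Under this isomorphism, an unordered pair $\{P,Q\}$ with $P=(y_0:y_1:y_2)$ and $Q=(z_0:z_1:z_2)$ corresponds to $\xx\in X\subset\PP^5$ with $x_i=y_iz_i$ and $x_{ij}=y_iz_j+y_jz_i$, which yields the key identity
\[
  \Delta_{ij}(\xx)=x_{ij}^2-4x_ix_j=(y_iz_j-y_jz_i)^2.
\]
A $\QQ$-point of $\Sym{2}{2}{\QQ}$ is a $\Gal(\Qbar/\QQ)$-stable unordered pair, so it belongs to one of three Galois types: (a) both $P,Q\in\PP^2(\QQ)$; (b) $Q=P^\sigma$ for $P\in\PP^2(K)\setminus\PP^2(\QQ)$ with $K$ a real quadratic field; (c) the same with $K$ imaginary. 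The identity above forces $\Delta_{ij}(\xx)$ to be a rational square, non-negative, or non-positive respectively, matching the three types bijectively with $X_0$, $X_+$, and $X_-$.

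The next step is the height comparison. Choosing primitive representatives $(y_i),(z_j)\in\ZZ^3$ in case (a) and $\alpha_i\in\OO_K$ with $(\alpha_0,\alpha_1,\alpha_2)$ coprime as an $\OO_K$-ideal in cases (b)/(c), one forms the six integers $x_i,x_{ij}$ as above (so $x_i=N_{K/\QQ}(\alpha_i)$ and $x_{ij}=\mathrm{Tr}_{K/\QQ}(\alpha_i\alpha_j^\sigma)$ in cases (b)/(c)). Up to the gcd of these six integers, which must be stripped off to form a primitive representative of $\xx$, one has $H(\xx)^{1/3}\asymp H(P)H(Q)$ in case (a) and $H(\xx)^{1/3}\asymp H_K(P)$ in cases (b)/(c), where $H$ denotes the standard Weil height on $\PP^2(\QQ)$ and $H_K$ the relative $K$-height on $\PP^2(K)$.

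For $X_0$ the count then reduces, after M\"obius inversion over this common divisor, to estimating the number of ordered coprime pairs $(P,Q)\in\PP^2(\QQ)^2$ with $H(P)H(Q)\le B^{1/3}$; a Schanuel convolution (applying $\#\{Q:H(Q)\le T\}\sim cT^3$ to $T=B^{1/3}/H(P)$ and then $\sum_{H(P)\le T}H(P)^{-3}\sim c'\log T$ by partial summation) produces $\sim cB\log B$, and dividing by $2$ for unordered pairs while discarding the diagonal (which lies on the Veronese and contributes only $O(B^{2/3})$) gives $N(X_0,H,B)=c_0B\log B+O(B)$. For $X_\pm$, a closed point of $\PP^2_\QQ$ of degree exactly $2$ is precisely a Galois-conjugate pair of type (b)/(c), so after the analogous M\"obius analysis the counting function equals Schmidt's count of quadratic points of $\PP^2_\QQ$ of bounded $K$-height, separated by the signature of the residue field; Schmidt's theorem furnishes the main term $c_\pm B\log B$ together with the error term $O(B(\log B)^{1/2})$ claimed.

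The hardest step will be the M\"obius inversion in cases (b)/(c): a primitive integer representative of $\xx$ need not come from a primitive $(\alpha_0,\alpha_1,\alpha_2)\in\OO_K^3$, so the inversion has to be carried out place by place, with archimedean and non-archimedean densities of primitive vectors in $\OO_K^3$ feeding into the leading constants. Once this is in place, the positivity of $c_0,c_+,c_-$ and the presence of a single logarithm in each asymptotic should align with the Manin--Peyre prediction for the minimal desingularization $\Hilb{2}{2}{\QQ}$, which has Picard rank~$2$.
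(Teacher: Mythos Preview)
Your identification of the three Galois types and their bijection with $X_0,X_+,X_-$ via the discriminants $\Delta_{ij}$ is correct and matches the paper. The gap is in the height comparison.

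You write that, after stripping a gcd, $H(\xx)^{1/3}\asymp H(P)H(Q)$ in case~(a) and $H(\xx)^{1/3}\asymp H_K(P)$ in cases~(b)/(c), and then propose a Schanuel convolution for $X_0$ and Schmidt's quadratic-points theorem for $X_\pm$. But an $\asymp$-relation between heights only yields $N\asymp B\log B$, not an asymptotic formula with a leading constant and the stated error terms. Concretely, for $P=(1:1:1)$, $Q=(1:-1:1)$ one has $H(P)H(Q)=1$ while $\xx=(1:-1:1:0:2:0)$ has $H(\xx)^{1/3}=2$; the height condition $\max_\lambda|x_\lambda|\le B^{1/3}$ does \emph{not} factor as $H(P)H(Q)\le B^{1/3}$, so your convolution $\sum_P \#\{Q:H(Q)\le B^{1/3}/H(P)\}$ counts for the wrong height. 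The same objection applies to invoking Schmidt's count of quadratic points by relative $K$-height: that is Schmidt's Theorem~3, which uses a different height from the one in the present theorem. No amount of M\"obius inversion over common divisors converts one height into the other; M\"obius only passes between ``primitive $(y,z)$'' and ``primitive $\xx$'' for the \emph{same} norm $\max_\lambda|x_\lambda|$.

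The paper's point is that no such conversion is needed. The six coordinates $x_i=y_iz_i$, $x_{ij}=y_iz_j+y_jz_i$ are \emph{precisely} the coefficients of the decomposable quadratic form
\[
f_{[\yy,\zz]}=(y_0X_0+y_1X_1+y_2X_2)(z_0X_0+z_1X_1+z_2X_2),
\]
so $H(\xx)=H(f_{[\yy,\zz]})^3$ \emph{exactly}, where $H(f)$ is the Weil height of the coefficient vector in $\PP^5(\QQ)$. Schmidt's Theorem~4a counts nonzero decomposable ternary quadratic forms (up to scalar) by exactly this height, split according to whether the field of definition of the linear factors is $\QQ$, real quadratic, or imaginary quadratic. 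Hence $N(X_\epsilon,H,B)=Z_8^\epsilon(2,B^{1/3})$ on the nose, and the theorem follows immediately from Schmidt, with the constants $c_0,c_+,c_-$ read off from his formulas. Your programme (product height, Schanuel convolution, Schmidt's Theorem~3) would instead prove the analogous statement for the Le~Rudulier height on $\Sym{2}{2}{\QQ}$, which the paper explicitly notes is a different anticanonical height not easily expressible in the coordinates of $X$.
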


See Proposition~\ref{prop:asymptotic_formula} for the values of
$c_0,c_+,c_-$. To interpret this result as an instance of Manin's
conjecture, we must desingularize $X$. In the following result, the
counting function is defined analogously to
\eqref{eq:counting_function}.

\begin{theorem}\label{thm:manin-peyre}
  Let $h : \tX \to X$ be the blow-up of the singular locus, and let
  $\tH := H \circ h$. Then $\tX$ is an almost Fano variety as in
  \cite[D\'efinition~3.1]{MR2019019} that satisfies the thin set
  version \cite[5.2]{MR4472281} of Manin's conjecture with respect to
  the anticanonical height function $\tH$ on $\tX(\QQ)$. Here,
  $\tX_0:=h^{-1}(X_0)$ is the accumulating thin set predicted by
  Lehmann--Sengupta--Tanimoto, and in its complement, we have
  \begin{equation*}
    N(\tX(\QQ)\setminus\tX_0,\tH,B) = (c_++c_-)B \log B+O(B(\log B)^{1/2}),
  \end{equation*}
  where $c_++c_-$ Peyre's constant as in \cite[5.1]{MR2019019}.
\end{theorem}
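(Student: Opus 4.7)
The plan is to combine Theorem~\ref{thm:main} with a geometric analysis of the desingularization $h:\tX\to X$. Under the isomorphism $X\cong\Sym{2}{2}{\QQ}$ of Proposition~\ref{prop:X_isom_Sym_2_2}, the singular locus of $X$ is the diagonal Veronese surface, so $h$ is the Hilbert--Chow morphism and $\tX\cong\Hilb{2}{2}{\QQ}$. This classical smooth projective geometrically rational fourfold has $\Pic(\tX)$ of rank $2$, generated (up to a factor of $2$) by the pullback $H$ of the hyperplane class from $\Sym{2}{2}{\QQ}$ and by the exceptional divisor $E$; $\tX$ is weak Fano with $-K_{\tX}$ big, and the vanishings $H^i(\tX,\OO_{\tX})=0$ for $i=1,2$ follow from G\"ottsche's formula applied to $\PP^2$ (which has vanishing irregularity and geometric genus). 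Together with the finite generation of the effective cone, this verifies the almost-Fano axioms of \cite[D\'efinition~3.1]{MR2019019}, and $\tH$ is an anticanonical height on $\tX$ by Lemma~\ref{lem:anticanonical_height}.

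Next, I pass to the counting. The morphism $h:\tX(\QQ)\to X(\QQ)$ is bijective outside the Veronese surface $V\cong\PP^2_\QQ$, and the contribution of $V(\QQ)$ to $N(\tX(\QQ),\tH,B)$ is $O(B)$ (since $\PP^2(\QQ)$ counted by the cube of the Weil height grows like $B$), with all these points contained in $X_0$ because points of the Veronese correspond to unordered pairs $[p,p]$ with $p\in\PP^2(\QQ)$ in $\Sym{2}{2}{\QQ}$. Therefore Theorem~\ref{thm:main} yields
\begin{equation*}
  N(\tX(\QQ)\setminus\tX_0,\tH,B) = N(X_+,H,B)+N(X_-,H,B)+O(B) = (c_++c_-)B\log B + O(B(\log B)^{1/2}).
\end{equation*}

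Two tasks remain: identifying $\tX_0$ as the accumulating thin set predicted by Lehmann--Sengupta--Tanimoto, and identifying $c_++c_-$ with Peyre's constant. For the first, I consider the natural degree-$2$ map $\PP^2_\QQ\times\PP^2_\QQ\to\Sym{2}{2}{\QQ}=X$, whose lift to $\tX$ factors through the blowup of the diagonal in $\PP^2_\QQ\times\PP^2_\QQ$; an explicit calculation of $a$- and $b$-invariants with respect to $-K_{\tX}$ places this cover in the LST list \cite[5.2]{MR4472281}, and its $\QQ$-point image is precisely $\tX_0=h^{-1}(X_0)$ because a $\QQ$-point of $\tX$ lifts iff the associated unordered pair in $\Sym{2}{2}{\QQ}$ consists of two $\QQ$-points, which by \eqref{eq:def_X_0+-} characterizes $X_0$. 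For the second, I interpret the constants $c_+,c_-$ produced by Schmidt's theorem in the proof of Theorem~\ref{thm:main} as integrals of local densities on $\tX$, and match them with the factors in Peyre's formula using an integral model of $\tX$ (for instance, $\Hilb{2}{2}{\ZZ}$ via the $\ZZ$-statement of Proposition~\ref{prop:X_isom_Sym_2_2}); the Galois-triviality of $\Pic(\tX_{\Qbar})=\ZZ^2$ ensures $H^1(\QQ,\Pic(\tX_{\Qbar}))=0$ and rules out any Brauer--Manin correction.

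The main obstacle is this last identification. Peyre's constant is defined as a Tamagawa-type integral over the adelic points of $\tX$, weighted by convergence factors built from the Picard lattice, whereas $c_++c_-$ emerges in Theorem~\ref{thm:main} from archimedean and $p$-adic densities in Schmidt's count of quadratic points on $\PP^2$. Reconciling the two formulations requires a careful term-by-term matching of local factors against the chosen integral model, together with verification that the real density splits correctly into the real-quadratic and imaginary-quadratic contributions producing $c_+$ and $c_-$; this is where the bulk of the work will go.
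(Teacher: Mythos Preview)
Your proposal is correct and follows essentially the same route as the paper: identify $\tX$ with $\Hilb{2}{2}{\QQ}$ via the Hilbert--Chow morphism, verify the almost-Fano axioms and the anticanonical nature of $\tH$, reduce the count on $\tX(\QQ)\setminus\tX_0$ to $N(X_+,H,B)+N(X_-,H,B)$ via Theorem~\ref{thm:main}, cite the Lehmann--Sengupta--Tanimoto framework for the thin set, and then spend the real effort matching $c_++c_-$ to Peyre's constant by computing $\alpha(\tX)=\tfrac19$, $\beta(\tX)=1$, and the Tamagawa number as a product of local densities that are compared term-by-term to Schmidt's archimedean volumes and $p$-adic factors.

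One small slip to watch: you write that ``the contribution of $V(\QQ)$ to $N(\tX(\QQ),\tH,B)$ is $O(B)$'', but the preimage $h^{-1}(V)$ is the exceptional divisor $E$, a $\PP^1$-bundle over $\PP^2$, and $E(\QQ)$ contains \emph{infinitely} many points of bounded $\tH$ (the paper notes this explicitly). This does not damage your argument, since $V(\QQ)\subset X_0$ implies $E(\QQ)\subset\tX_0$, so all of $E$ is removed before counting and the bijection $\tX(\QQ)\setminus\tX_0 \to X_+\cup X_-$ holds without any estimate on the Veronese locus; just drop the $O(B)$ claim. The paper also computes the $p$-adic densities directly from an explicit adelic metric on $\omega_\tX^{-1}$ built from the sections $X_\lambda^3$ rather than via an integral model, which you may find more concrete when carrying out the matching you flag as the main obstacle.
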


See Remark~\ref{rem:exceptional_points_tX} for the rational points in
the accumulating thin set $\tX_0$.

In Section~\ref{sec:geometry}, we prove that $X$ is isomorphic to the
symmetric square of the projective plane over $\QQ$ and even over
$\ZZ$ since we could not find a reference working in this generality
(see \cite[Example~4.5]{MR1354269} over $\CC$). For this, we first
describe the symmetric square of the affine plane over $\ZZ$
(differently from its standard description in characteristic $0$
\cite[Example~7.1.3(2)]{MR2223408}, which does not work in
characteristic $2$), and then patch it together. (Note that the
factors $2$ on the diagonal of \eqref{eq:determinant} give a variety
that is isomorphic over $\QQ$ to the one described in \cite[\S
4.4]{MR1738215} without these factors, but they are important to
obtain an isomorphism to the symmetric square of the projective plane
over $\ZZ$ and for compatibility with Schmidt's results. Hence our
version of the chordal cubic fourfold seems to be the more natural
choice from an arithmetic point of view.)

In Section~\ref{sec:points}, we examine how the rational and local
points on $X$ are governed by the values of $\Delta_{ij}$ modulo
squares, depending on the ground field.

For the proof of Theorem~\ref{thm:main} in
Section~\ref{sec:asymptotic}, we derive from the isomorphism
$X \cong \Sym 2 2 \QQ$ that $X(\QQ)$ with our height function $H$ is
in bijection to the set of nonzero decomposable ternary quadratic
forms $f$ over $\QQ$, where proportional forms are counted as one,
with the cube of the height of the coefficient vector, which is
considered in work of Schmidt \cite[Theorem~4a]{MR1330740} .

For the proof of Theorem~\ref{thm:manin-peyre} in
Section~\ref{sec:expectation}, we compare the leading constants in
Schmidt's result to Peyre's constant for the desingularization
$\tX$, which is isomorphic to the Hilbert scheme $\Hilb 2 2 \QQ$ of
subschemes of length $2$ in $\PP^2_\QQ$. The main difficulty is to
compute the local densities, in particular at the places $v=2$ and
$v=\infty$.

Manin's conjecture for $\Sym 2 2 \QQ$ and its desingularization
$\Hilb 2 2 \QQ$ was also considered in work of Le Rudulier
\cite{LeRudulierThesis,MR4057715}. New in our work is the connection
to the chordal cubic fourfold. Furthermore, we work with a different
anticanonical height function on $\Sym 2 2 \QQ$ (the one used in
\cite{MR4057715} apparently cannot be easily expressed in terms of the
coordinates on $X$), hence we must apply a different result of Schmidt
(\cite[Theorem~4a]{MR1330740} instead of \cite[Theorem~3]{MR1330740}),
which has more complicated real densities in the leading
constants. Finally, we perform the comparison of the $p$-adic
densities in a different way (\cite[Proposition~3.2]{MR4057715}
compares the $p$-adic density to the number of points on the Hilbert
scheme over $\FF_p$; in view of \cite[\S 8.4.2,
Lemma~8.16]{MR3741845}, which mentions good reduction of the Hilbert
scheme only over odd primes, it seems preferable to us to compute the
expected $p$-adic densities by evaluating the corresponding $p$-adic
integral).

For a discussion of $\Hilb 2 2 \QQ$ in relation to Peyre's
\cite{MR3741845} notion of freeness, see \cite{sawin-freeness}.

\subsection*{Acknowledgements}

We learned about the connection between Schmidt's work and the
symmetric square of the projective plane from A.~Chambert-Loir and
M.~Widmer. We noticed the relation to the cubic form (\ref{eq:cubic})
while visiting the Centro di Ricerca Mate\-matica Ennio De Giorgi,
Pisa (2012), and continued to work on this while spending a sabbatical
at the University of Oxford (2018) visiting D.~R. Heath-Brown; their
hospitality is gratefully acknowledged. In O.~Debarre's talk at the
workshop \emph{Arithmetic and Geometry of Cubic Hypersurfaces} in
Hannover (2018), we learned that this is the chordal cubic
fourfold. We also thank E.~Peyre and P.~Salberger for helpful
discussions.

\section{Geometry}\label{sec:geometry}

In this section, we discuss the geometry of our singular cubic
fourfold $X$. The main result (Proposition~\ref{prop:X_isom_Sym_2_2})
shows that it is isomorphic to the symmetric square of the projective
plane, even over $\ZZ$.

To determine $\Sym 2 2 \ZZ$, we will glue copies of
$\Symm^2(\AAA^2_\ZZ)$. Over an algebraically closed field $K$ of
characteristic $0$, \cite[Example~7.1.3(2)]{MR2223408} shows that
\begin{equation}\label{eq:Sym_2_A^2_C}
  \Symm^2(\AAA^2_K) \cong \AAA^2_K \times
  \Spec(K[U_1',U_2',U_{12}']/(U_1'U_2'-U_{12}'^2)).
\end{equation}
In fact, the proof works over any field $K$ of characteristic $\ne 2$,
but the result is not true in characteristic $2$. Instead, we obtain
the following description in arbitrary characteristic and even over
$\ZZ$:

\begin{lemma}\label{lem:sym2a2}
  The symmetric square $\Symm^2(\AAA^2_\ZZ)$ of the affine plane over
  $\ZZ$ is isomorphic to
  \begin{equation*}
    \Spec(\ZZ[U_{01},U_{02},U_1,U_2,U_{12}]/(U_{12}^2+U_1U_{02}^2+U_2U_{01}^2-U_{01}U_{02}U_{12}-4U_1U_2)) \subset \AAA^5_\ZZ.
  \end{equation*}
  With $\tau \ne \id$ in $S_2$ acting on the coordinate ring $\ZZ[Y_1,Y_2,Z_1,Z_2]$ by
  exchanging $Y_i$ with $Z_i$, we have
  \begin{equation*}
    U_{0i}=Y_i+Z_i,\ U_i=Y_iZ_i,\ U_{12}=Y_1Z_2+Y_2Z_1 \in \ZZ[Y_1,Y_2,Z_1,Z_2]^{S_2}
  \end{equation*}
\end{lemma}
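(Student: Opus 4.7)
The plan is to write down a ring homomorphism $\phi\colon A \to B$, where $A:=\ZZ[U_{01},U_{02},U_1,U_2,U_{12}]/(f)$ with $f$ the displayed defining polynomial, and $B:=\ZZ[Y_1,Y_2,Z_1,Z_2]^{S_2}$, so that $\Spec B = \Symm^2(\AAA^2_\ZZ)$. The map sends each $U$ to the $S_2$-invariant prescribed in the lemma, and the proof then reduces to (i) checking that $f$ vanishes on these images, (ii) showing $\phi$ is surjective, and (iii) showing $\phi$ is injective. Step (i) is a bookkeeping expansion: both $U_{12}^2+U_1U_{02}^2+U_2U_{01}^2$ and $U_{01}U_{02}U_{12}$ pick up all $S_2$-orbit sums of the bidegree-$(2,2)$ monomials in $(Y_i,Z_i)$, and they agree except on the ``diagonal'' $Y_1Y_2Z_1Z_2$, whose missing contribution accounts precisely for the leftover $4U_1U_2$.

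For (ii), I would first invoke the structural fact that for a free $\ZZ$-algebra $C$, the invariants $(C\otimes_\ZZ C)^{S_2}$ are $\ZZ$-spanned by the diagonals $m\otimes m$ and the orbit sums $m\otimes m'+m'\otimes m$ for monomials $m,m'$ in a $\ZZ$-basis of $C$. Writing $T(c):=c\otimes 1+1\otimes c$ and $N(c):=c\otimes c$, the identity $T(a)T(b)-T(ab)=a\otimes b+b\otimes a$ shows that $B$ is generated as a $\ZZ$-algebra by $\{N(m),T(m) : m\text{ a monomial in }Y_1,Y_2\}$. Now $N(Y_1^aY_2^b)=U_1^aU_2^b$ is immediate. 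For the $T$s, the recursion
\begin{equation*}
  T(Y_1^a Y_2^b) = U_{01}\,T(Y_1^{a-1}Y_2^b) - U_1\,T(Y_1^{a-2}Y_2^b) \quad (a\ge 2),
\end{equation*}
proved by a one-line expansion, together with the symmetric recursion in $b$ and the base cases $T(1)=2$, $T(Y_1)=U_{01}$, $T(Y_2)=U_{02}$, $T(Y_1Y_2)=U_{01}U_{02}-U_{12}$, expresses every $T(Y_1^aY_2^b)$ as a polynomial in the five $U$'s by induction on $a+b$.

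For (iii), both $A$ and $B$ are $\ZZ$-flat: $A$ because $f$ has content $1$, and $B$ because it is a $\ZZ$-submodule of the free $\ZZ$-module $\ZZ[Y_1,Y_2,Z_1,Z_2]$. So it suffices to check that $\phi_\QQ\colon A_\QQ\to B_\QQ$ is injective. Viewed as a monic quadratic in $U_{12}$, the polynomial $f$ has discriminant $(U_{01}^2-4U_1)(U_{02}^2-4U_2)$, a product of two irreducible non-associate polynomials in $\QQ[U_{01},U_{02},U_1,U_2]$ and hence not a square; therefore $f$ is irreducible over $\QQ$, so $A_\QQ$ is an integral domain of Krull dimension $4$. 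Since $B_\QQ$ is also a $4$-dimensional integral domain (a finite-group quotient of $\QQ[Y_1,Y_2,Z_1,Z_2]$) and $\phi_\QQ$ is surjective by (ii), the map $\phi_\QQ$ must be an isomorphism.

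The main obstacle is the surjectivity step: the general spanning statement for $(C\otimes C)^{S_2}$ has to be verified over $\ZZ$ (where Weyl's polarization theorem does not apply), and the base cases of the recursion for $T$ must be handled by running the recursion symmetrically in both $Y_1$ and $Y_2$. The verification of the defining relation and the dimension/flatness argument for injectivity are then routine.
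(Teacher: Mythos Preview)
Your proof is correct and follows the same overall outline as the paper (well-definedness, surjectivity, and injectivity of the ring map into the invariant ring), but the two key steps are executed differently.

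For surjectivity, the paper also reduces to orbit sums $M+\tau(M)$, but organizes the induction by first factoring out any $U_i=Y_iZ_i$ dividing $M$, reducing to $M=Y_i^aZ_j^b$ with $i\ne j$ and $a\ge b$, and then checking directly that $U_{0i}^{a-b}U_{12}^b$ differs from $M+\tau(M)$ by a multiple of $U_i$. Your trace recursion $T(Y_1^aY_2^b)=U_{01}\,T(Y_1^{a-1}Y_2^b)-U_1\,T(Y_1^{a-2}Y_2^b)$ is a tidy and more mechanical alternative that accomplishes the same thing.

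For injectivity, the paper instead invokes the known characteristic-zero description of $\Symm^2(\AAA^2_\CC)$ in the alternative coordinates $U_{0i}'=Y_i+Z_i$, $U_i'=(Y_i-Z_i)^2$, $U_{12}'=(Y_1-Z_1)(Y_2-Z_2)$, rewrites the single relation $U_{12}'^2=U_1'U_2'$ in the $U$-coordinates to recover $4f$, and then notes that any further relation among the $U$'s over $\ZZ$ would already hold over $\CC$. Your route via irreducibility of $f$ (as a monic quadratic in $U_{12}$ with non-square discriminant $(U_{01}^2-4U_1)(U_{02}^2-4U_2)$), $\ZZ$-flatness of $A$, and a Krull-dimension comparison over $\QQ$ is more self-contained and avoids the external reference; the paper's argument is shorter if one is willing to cite the characteristic-zero result.
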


\begin{proof}
  By \cite[Proposition~V.1.1]{MR217087},
  $\Symm^2(\AAA^2_\ZZ) \cong \Spec(\ZZ[Y_1,Y_2,Z_1,Z_2]^{S_2})$. The
  invariant ring $\ZZ[Y_1,Y_2,Z_1,Z_2]^{S_2}$ contains
  $U_{01},U_{02},U_1,U_2,U_{12}$. By induction over the degree, we
  show that every homogeneous $P \in \ZZ[Y_1,Y_2,Z_1,Z_2]^{S_2}$ is in
  the $\ZZ$-subalgebra $A$ generated by these five polynomials. In
  degree $d=0$, there is nothing to prove. In degree $d>0$, a
  polynomial invariant under $\tau$ is an integral linear combination
  of invariant monomials (which clearly are products of powers of
  $Y_1Z_1=U_1$ and $Y_2Z_2=U_2$, hence in $A$) and of expressions of
  the form $M+\tau(M)$, where $M$ is a noninvariant monomial.

  To show that $M+\tau(M) \in A$ for all such $M$, we first consider
  the case that $M = Y_iZ_i M'$ for some monomial $M'$ (which must
  have lower degree); in this case $M+\tau(M) = U_i (M'+\tau(M'))$,
  where $M'+\tau(M') \in A$ by induction. Hence it remains to consider
  $M=Y_i^aZ_j^b$ for $\{i,j\}=\{1,2\}$, where we may
  assume $a\ge b$. It is not hard to see that
  $U_{0i}^{a-b}U_{12}^b=(Y_i+Z_i)^{a-b}(Y_iZ_j+Y_jZ_i)^b \in A$ is the
  sum of $M+\tau(M)$ and a multiple of $Y_iZ_i$, which is in $A$ by
  induction. Hence also $M+\tau(M) \in A$.

  Over the complex numbers, \cite[Example~7.1.3(2)]{MR2223408} shows that
  \begin{equation*}
    \Symm^2(\AAA^2_\CC) \cong \Spec(\CC[U_{01}',U_{02}',U_1',U_2',U_{12}']/(U_{12}'^2-U_1'U_2')),
  \end{equation*}
  as in (\ref{eq:Sym_2_A^2_C}), where
  \begin{equation*}
    U_{0i}' = Y_i+Z_i,\ U_i' = (Y_i-Z_i)^2,\ U_{12}'=(Y_1-Z_1)(Y_2-Z_2).
  \end{equation*}
  Clearly
  \begin{equation*}
    U_{0i}'=U_{0i},\ U_i' = U_{0i}^2-4U_i,\ U_{12}'=U_{01}U_{02}-2U_{12},
  \end{equation*}
  hence the relation is
  \begin{equation*}
    (U_{12}'^2-U_1'U_2')/4 = U_{12}^2+U_1U_{02}^2+U_2U_{01}^2-U_{01}U_{02}U_{12}-4U_1U_2.
  \end{equation*}
  Therefore, the ideal of relations between our five generators is
  generated by this polynomial over $\CC$, and the same is true over
  $\ZZ$ since any additional relation over $\ZZ$ would also hold over
  $\CC$.
\end{proof}

\begin{prop}\label{prop:X_isom_Sym_2_2}
  The map $\PP^2_\ZZ \times_\ZZ \PP^2_\ZZ \to \PP^5_\ZZ$ sending $((y_0:y_1:y_2),(z_0:z_1:z_2))$ to
  \begin{equation*}
    (y_0z_0:y_1z_1:y_2z_2:y_0z_1+y_1z_0:y_0z_2+y_2z_0:y_1z_2+y_2z_1).
  \end{equation*}
  induces an isomorphism between the symmetric square $\Sym 2 2 \ZZ$
  of the projective plane over $\ZZ$ and the chordal cubic fourfold
  \begin{equation*}
    \Xm = \Proj(\ZZ[X_0,X_1,X_2,X_{01},X_{02},X_{12}]/(P)) \subset \PP^5_\ZZ
  \end{equation*}
  over $\ZZ$ defined by $P \in \ZZ[X_0,X_1,X_2,X_{01},X_{02},X_{12}]$
  as in (\ref{eq:cubic}).
\end{prop}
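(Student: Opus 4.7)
The plan is to define the candidate morphism $\phi\colon\PP^2_\ZZ\times_\ZZ\PP^2_\ZZ\to\Xm$, descend it through the categorical quotient $\Sym 2 2 \ZZ = (\PP^2_\ZZ\times_\ZZ\PP^2_\ZZ)/S_2$, and verify that the resulting $\psi\colon\Sym 2 2 \ZZ\to\Xm$ is an isomorphism by restricting to affine charts and invoking Lemma~\ref{lem:sym2a2} (once as stated, and once more in a dual form).

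First I would check that the six polynomials $y_iz_i$ and $y_iz_j+y_jz_i$ have no common zero on $\PP^2_\ZZ\times_\ZZ\PP^2_\ZZ$: locally, if $y_0$ is a unit, then $y_0z_0=0$ forces $z_0=0$, and then $y_0z_j+y_jz_0=y_0z_j$ forces $z_j=0$ for $j=1,2$, contradicting the nonvanishing of $z$. So $\phi$ is a well-defined morphism. Next I would verify that $P\circ\phi=0$: evaluating the matrix in \eqref{eq:determinant} at $\phi(y,z)$ yields $yz^\top+zy^\top$, a sum of two rank-one matrices, hence of rank at most two and of vanishing determinant; by \eqref{eq:determinant} this gives $-2(P\circ\phi)=0$ in the torsion-free ring $\ZZ[y_0,\dots,z_2]$, so $P\circ\phi=0$ and $\phi$ factors through $\Xm$. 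As $\phi$ is visibly $S_2$-invariant under $y\leftrightarrow z$, and the categorical quotient $\Sym 2 2 \ZZ$ exists by \cite[Proposition~V.1.1]{MR217087} (as recalled in the proof of Lemma~\ref{lem:sym2a2}), $\phi$ descends uniquely to $\psi\colon\Sym 2 2 \ZZ\to\Xm$.

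To show $\psi$ is an isomorphism, I would work with the six standard affine charts $D_+(X_\alpha)\cap\Xm$ of $\Xm$. On each diagonal chart $D_+(X_i)\cap\Xm$ ($i=0,1,2$), the preimage $\phi^{-1}(D_+(X_i))=\{y_iz_i\ne 0\}$ is an $S_2$-invariant affine open which, after dehomogenizing by $y_i=z_i=1$, is precisely $\AAA^2_\ZZ\times_\ZZ\AAA^2_\ZZ$; Lemma~\ref{lem:sym2a2} identifies its $S_2$-quotient with the affine hypersurface cut out by the dehomogenization of $P$ at $X_i=1$, and the six invariants of the lemma are exactly the six $X_\alpha/X_i$, so $\psi$ restricts to the identity on generators. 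Hence $\psi$ is an isomorphism above $D_+(X_0)\cup D_+(X_1)\cup D_+(X_2)$, whose complement in $\Xm$ --- using $P|_{X_0=X_1=X_2=0}=-X_{01}X_{02}X_{12}$ --- is the union of three coordinate lines in the $\PP^2$ given by $X_0=X_1=X_2=0$.

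The main obstacle is the three off-diagonal charts $D_+(X_{ij})\cap\Xm$, which cover these missing lines. Here $\phi^{-1}(D_+(X_{ij}))=\{y_iz_j+y_jz_i\ne 0\}$ is $S_2$-invariant and affine (as the complement of the zero locus of a section of the very ample line bundle $\OO(1,1)$), but it is \emph{not} a product of two copies of an affine chart of $\PP^2$; instead it is the $S_2$-orbit of the non-invariant affine $\{y_i\ne 0,\,z_j\ne 0\}$. I would rerun the invariant-theory argument of Lemma~\ref{lem:sym2a2} in this setting: describe the coordinate ring of this affine open, show by a bidegree induction parallel to Lemma~\ref{lem:sym2a2} that its $S_2$-invariant regular functions are generated as a $\ZZ$-algebra by the five ratios $X_\alpha/X_{ij}$ for $\alpha\ne ij$, and verify that the sole defining relation is $P|_{X_{ij}=1}$, deducing the $\ZZ$-statement from the $\CC$-statement via torsion-freeness of the module of relations, as in the final paragraph of the proof of Lemma~\ref{lem:sym2a2}. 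By the $S_3$-symmetry that permutes the three coordinates of $\PP^2$ and correspondingly the pairs $\{ij\}$ (and preserves $P$, hence $\Xm$), a single off-diagonal case suffices.
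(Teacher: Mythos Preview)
Your argument is sound, and by defining $\psi$ globally as the descent of $\phi$ you sidestep the gluing verifications that occupy the last part of the paper's proof. The genuine divergence is in the off-diagonal charts. You work with $D_+(X_{ij})=\{X_{ij}\ne 0\}$, whose preimage $\{y_iz_j+y_jz_i\ne 0\}$ in $\PP^2_\ZZ\times_\ZZ\PP^2_\ZZ$ is not a product of affine charts of the two factors, and this forces you to prove a new invariant-theory statement (that the bidegree-$(n,n)$ part of $\ZZ[y,z]^{S_2}$ is generated by the six bilinear invariants $X_\alpha$, and that the relation ideal is $(P)$). The paper avoids this entirely by replacing $D_+(X_{ij})$ with the open set $V_{ij}=\{X_i+X_j+X_{ij}\ne 0\}$: the factorization
\[
  X_i+X_j+X_{ij}=Y_iZ_i+Y_jZ_j+Y_iZ_j+Y_jZ_i=(Y_i+Y_j)(Z_i+Z_j)
\]
makes the preimage a genuine product $\{Y_i+Y_j\ne 0\}\times\{Z_i+Z_j\ne 0\}\cong\AAA^2_\ZZ\times_\ZZ\AAA^2_\ZZ$, so Lemma~\ref{lem:sym2a2} applies \emph{verbatim} after a linear change of affine coordinates on each factor. (The six opens $V_i,V_{ij}$ still cover $\Xm$: if all $X_i=0$ then $P=-X_{01}X_{02}X_{12}=0$ kills some $X_{kl}$, and any surviving $X_{ij}$ equals $X_i+X_j+X_{ij}$.) Thus the paper trades your extra invariant-theory lemma for explicit but routine chart-compatibility checks, while you trade those checks for the extra lemma.

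One small inaccuracy: $\{y_iz_j+y_jz_i\ne 0\}$ is contained in, but strictly smaller than, the $S_2$-orbit of $\{y_i\ne 0,\ z_j\ne 0\}$ (take $y_i=y_j=z_j=1$, $z_i=-1$). This does not affect your argument, which only uses that the set is an $S_2$-invariant affine open.
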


\begin{proof}
  The scheme $\Sym 2 2 \ZZ$ is the quotient of
  $Y=\PP^2_\ZZ \times_\ZZ \PP^2_\ZZ$ by the action of the symmetric group
  $S_2=\{\id, \tau\}$ where $\tau$ swaps the components. The quotient
  exists since $S_2$ is finite and $Y$ is projective. We
  construct it as follows. We cover $Y$ by the six $S_2$-invariant affine open
  subsets
  \begin{equation*}
    W_i=\{Y_iZ_i \ne 0\},\quad W_{ij} = \{(Y_i+Y_j)(Z_i+Z_j) \ne 0\}.
  \end{equation*}
  Each of them is isomorphic to $\AAA^2_\ZZ \times_\ZZ \AAA^2_\ZZ$ with $S_2$ acting by
  swapping components, whose quotient is computed in
  Lemma~\ref{lem:sym2a2}.

  We claim that $W_i$ and $W_{ij}$ are isomorphic to the open subsets
  $V_i=\{X_i \ne 0\}$ and $V_{ij}=\{X_{ij}+X_i+X_j \ne 0\}$ of
  $X$, respectively. We note that these cover $X$. (We think of the variables on $X$
  as $X_i=Y_iZ_i$ and $X_{ij}=Y_iZ_j+Y_jZ_i$.)

  Indeed, an isomorphism from
  \begin{equation*}
    V_0 = \Spec(\ZZ[X_1,X_2,X_{01},X_{02},X_{12}]/(X_{12}^2+X_1X_{02}^2+X_2X_{01}^2-X_{01}X_{02}X_{12}-4X_1X_2))
  \end{equation*}
  to
  \begin{equation*}
    W_0/S_2 = \Spec(\ZZ[U_1,U_2,U_{01},U_{02},U_{12}]/(U_{12}^2+U_1U_{02}^2+U_2U_{01}^2-U_{01}U_{02}U_{12}-4U_1U_2))
  \end{equation*}
  is clearly induced by
  \begin{equation}\label{eq:map-V0-W0/S2}
    U_i \mapsto X_i,\ U_{ij} \mapsto X_{ij}.
  \end{equation}
  Analogously, we have $W_1\cong V_1$ and $W_2\cong V_2$.

  For $W_{01}/S_2 \cong V_{01}$, we use the isomorphism
  \begin{equation*}
    \AAA^2_\ZZ \times_\ZZ \AAA^2_\ZZ \to W_{01}, \quad ((y_1,y_2),(z_1,z_2)) \mapsto ((1-y_1:y_1:y_2),(1-z_1:z_1:z_2)).
  \end{equation*}
  to obtain an isomorphism $\Symm^2(\AAA^2_\ZZ) \to W_{01}/S_2$. The
  isomorphism
  \begin{equation*}
    \Symm^2(\AAA^2_\ZZ) \cong W_{01}/S_2 \to V_{01}=\Spec(\ZZ[X_0', \dots, X_{12}']/(P, X_{01}'+X_0'+X_1'-1)
  \end{equation*}
  is therefore induced by (obtained by setting $Y_0'=1-Y_1'$ and
  $Z_0'=1-Z_1'$ in the expressions for $X_i',X_{ij}'$ and expressing them
  in terms of $U_i',U_{ij}'$)
  \begin{align*}
    &X_0'\mapsto 1-U_{01}'+U_1',\ X_1' \mapsto U_1',\ X_2' \mapsto U_2', \\
    &X_{01}'\mapsto U_{01}'-2U_1',\ X_{02}' \mapsto U_{02}'-U_{12}',\ X_{12}' \mapsto U_{12}',
  \end{align*}
  with inverse $V_{01} \to W_{01}/S_2$ induced by
  \begin{equation}\label{eq:map-V01-W01/S2}
    U_1'\mapsto X_1',\ U_2'\mapsto X_2',\ U_{01}'\mapsto X_{01}'+2X_1',\ U_{02}' \mapsto X_{02}'+X_{12}',\ U_{12}'\mapsto X_{12}'.
  \end{equation}
  Again it is straightforward to check that the maps are inverse to
  each other and are compatible with the defining equations;
  $V_{02} \cong \Symm^2(\AAA^2_k) \cong V_{12}$ are analogous.

  Finally, we must show that these isomorphisms are compatible with
  the natural gluing maps between $W_i,W_{ij}$, and
  $W_i/S_2,W_{ij}/S_2$, and $V_i,V_{ij}$, respectively.

  For example, consider gluing $W_0$ and $W_{01}$ along
  \begin{equation*}
    W_0 \cap W_{01}=\{Y_0Z_0(Y_0+Y_1)(Z_0+Z_1) \ne 0\}.
  \end{equation*}
  Here,
  \begin{equation*}
    W_0=\Spec(k[Y_1,Y_2,Z_1,Z_2]), \qquad W_{01}=\Spec(k[Y_1',Y_2',Z_1',Z_2'])
  \end{equation*}
  contain principal open subsets $\{((1+Y_1)(1+Z_1) \ne 0\}$,
  $\{(1-Y_1')(1-Z_1') \ne 0\}$, respectively, which are isomorphic via
  \begin{equation*}
    ((1:y_1:y_2),(1:z_1:z_2)) = \left(\left(\tfrac{1}{1+y_1}:\tfrac{y_1}{1+y_1}:\tfrac{y_2}{1+y_1}\right),\left(\tfrac{1}{1+z_1}:\tfrac{z_1}{1+z_1}:\tfrac{z_2}{1+z_1}\right)\right),
  \end{equation*}
  i.e., via the map
  \begin{equation*}
    Y_1'\mapsto \tfrac{Y_1}{1+Y_1},\ Y_2'\mapsto \tfrac{Y_2}{1+Y_1},
    \ Z_1' \mapsto \tfrac{Z_1}{1+Z_1},\ Z_2'\mapsto \tfrac{Z_2}{1+Z_1}.
  \end{equation*}
  The induced isomorphism between the corresponding principal open
  subsets of $W_0/S_2$ (with coordinate functions
  $U_{01},U_{02},U_1,U_2,U_{12}$) and $W_{01}/S_2$ (with coordinates
  $U_{01}',U_{12}',U_1',U_2',U_{12}'$)
  is given by
  \begin{equation}\label{eq:map-W0/S2-W01/S2}
    \begin{aligned}
      &U_{01}' = Y_1'+Z_1' \mapsto \tfrac{Y_1}{1+Y_1}+\tfrac{Z_1}{1+Z_1}=\tfrac{Y_1+Z_1+2Y_1Z_1}{(1+Y_1)(1+Z_1)} = \tfrac{U_{01}+2U_1}{1+U_{01}+U_1},\\
      &U_{02}' \mapsto \tfrac{U_{02}+U_{12}}{1+U_{01}+U_1},\ U_1' \mapsto \tfrac{U_1}{1+U_{01}+U_1},\ U_2'\mapsto \tfrac{U_2}{1+U_{01}+U_1},\ U_{12}'\mapsto \tfrac{U_{12}}{1+U_{01}+U_1}.
    \end{aligned}
  \end{equation}
  We must compare this to the coordinate change between $V_0$ (with
  coordinate functions $X_i,X_{ij}$) and $V_{01}$ (with coordinate
  functions $X_i',X_{ij}'$) given by
  \begin{align*}
    &(1:x_1:x_2:x_{01}:x_{02}:x_{12})\\&= \left(\tfrac{1}{1+x_1+x_{01}}:\tfrac{x_1}{1+x_1+x_{01}}:\tfrac{x_2}{1+x_1+x_{01}}:\tfrac{x_{01}}{1+x_1+x_{01}}:\tfrac{x_{02}}{1+x_1+x_{01}}:\tfrac{x_{12}}{1+x_1+x_{01}}\right),
  \end{align*}
  which is induced by
  \begin{equation}\label{eq:map-V0-V01}
    X_0' \mapsto \tfrac{1}{1+X_1+X_{01}},\ X_1' \mapsto \tfrac{X_1}{1+X_1+X_{01}},\ \dots,\ X_{12}' \mapsto \tfrac{X_{12}}{1+X_1+X_{01}}.
  \end{equation}
  In total, the isomorphism $V_0 \to W_0/S_2 \to W_{01}/S_2$ is
  induced by the composition of \eqref{eq:map-W0/S2-W01/S2} and \eqref{eq:map-V0-W0/S2}, which maps
  \begin{equation*}
    U_{01}' \mapsto \tfrac{U_{01}+2U_1}{1+U_{01}+U_1} \mapsto \tfrac{X_{01}+2X_1}{1+X_{01}+X_1},
  \end{equation*}
  while $V_0 \to V_{01} \to W_{01}/S_2$ is induced by the composition
  of \eqref{eq:map-V01-W01/S2} and \eqref{eq:map-V0-V01}, which maps
  \begin{equation*}
    U_{01}' \mapsto X_{01}'+2X_1' \mapsto \tfrac{X_{01}+2X_1}{1+X_1+X_{01}};
  \end{equation*}
  similarly, we check that they agree on the other coordinates.
  
  The remaining computations are long, but straightforward.
\end{proof}

\begin{remark}\label{rem:singular_locus}
  As mentioned in the introduction, the singular locus of the chordal
  cubic fourfold $X \subset \PP^5_K$ is given by the six
  $2\times 2$-minors of the determinant in \eqref{eq:determinant},
  which are the partial derivaties
  \begin{equation*}
    \Delta_{01},\ \Delta_{02},\ \Delta_{12},\ X_{02}X_{12}-2X_2X_{01},\ X_{01}X_{12}-2X_1X_{02},\ X_{01}X_{02}-2X_0X_{12}
  \end{equation*}
  of the cubic form (\ref{eq:cubic}).  It is a reduced scheme for
  $\Char(K) \ne 2$, but nonreduced in characteristic $2$.

  As a set, the vanishing locus of these partial derivatives coincides
  with the subset of $X$ where $\Delta_{01},\Delta_{02},\Delta_{12}$
  vanish. Indeed, over a field $K$ of characteristic $\ne 2$, let
  $\xx = (x_0,\dots,x_{12}) \in X(K)$ with
  $\Delta_{01}(\xx)=\Delta_{02}(\xx)=\Delta_{12}(\xx)=0$. At least one
  of $x_0,x_1,x_2$ most be nonzero since otherwise $\Delta_{ij}=0$
  implies that also all $x_{ij}=0$. By symmetry, we may assume
  $x_0=1$. Then $x_1=x_{01}^2/4$, $x_2=x_{02}^2/4$, and hence
  $x_{12}^2=4x_1x_2=(x_{01}x_{02}/2)^2$, so that
  $x_{12} = \pm x_{01}x_{02}/2$. However, only the sign choice
  $x_{12} = x_{01}x_{02}/2$ satisfies (\ref{eq:cubic}), and then also
  the other three partial derivatives vanish. In characteristic $2$,
  it is easy to check that precisely the points with
  $x_{01}=x_{02}=x_{12}=0$ are singular, which is equivalent to
  $\Delta_{01}(\xx)=\Delta_{02}(\xx)=\Delta_{12}(\xx)=0$ in this case.

  However, $\Delta_{01},\Delta_{02},\Delta_{12}$ define a nonreduced
  scheme in any characteristic.

  The singular locus of $\Sym 2 2 K$ is the diagonal image of
  $\PP^2_K$ \cite[\S 2]{MR0335512}. Of course its image in $X$ under
  the isomorphism from Proposition~\ref{prop:X_isom_Sym_2_2} is the
  singular locus described above. Explicitly, it is obtained by the
  Veronese map 
  \begin{equation}\label{eq:diagonal_map}
    \PP^2_K\to X \subset \PP^5_K, \qquad
    (x_0:x_1:x_2)\mapsto (x_0^2:x_1^2:x_2^2:2x_0x_1:2x_0x_2:2x_1x_2),
  \end{equation}
  whose image is the Veronese surface.
\end{remark}

\section{Parameterization of rational and local points}\label{sec:points}

In this section, we show how the set of rational points on the chordal
cubic fourfold $X$ can be parameterized, in particular over the
completions $\QQ_v$ of $\QQ$.

\begin{definition}\label{def:R_k}
  Given a field $K$, let $R_K$ be a set of representatives for
  $K^\times/K^{\times 2}$; we choose $1$ to represent the trivial
  class $K^{\times 2}$. In particular, we make the following choices
  for $\QQ$ and its completions (see \cite[\S II.3.3]{MR344216}):
  \begin{itemize}
  \item Let $R_\QQ$ be the set of squarefree integers.
  \item Let $R_\RR=\{1,-1\}$.
  \item Let $R_{\QQ_2} = \{1,3,5,7,2,6,10,14\}$.
  \item For $p\ne 2$, let $R_{\QQ_p}=\{1,p,u,up\}$ for some 
    $u \in \ZZ_p^\times$ that reduces to a quadratic nonresidue modulo $p$.
  \end{itemize} 
\end{definition}

Every $K$-rational point on $\Sym 2 2 K$ is an unordered pair
$[\yy,\zz]$ of $K$-rational points (i.e., $\yy,\zz \in \PP^2(K)$) or
of conjugate points defined over a quadratic extension $K'$ of $K$
(i.e., $\yy,\zz \in \PP^2(K')$ with $\zz=\overline{\yy}$, where
$\overline{\,\cdot\,}$ is the nontrivial element of the Galois group
of $K'/K$). The quadratic extensions of $K$ are $K(\sqrt\epsilon)$ for
$\epsilon \in R_K$.

\begin{lemma}\label{lem:X(k)_components}
  Let $K$ be a field, with $R_K$ as in Definition~\ref{def:R_k}. Let
  $X \subset \PP^5_K$ be the chordal cubic fourfold defined by
  \eqref{eq:cubic} over $K$. For $\xx=(x_0: \dots: x_{12}) \in X(K)$
  and distinct $i,j \in \{0,1,2\}$, let
  $\Delta_{ij}(\xx):=x_{ij}^2-4x_ix_j$ (which is defined in $K$ up to
  multiplication by squares in $K^\times$).

  \begin{enumerate}[(i)]
  \item\label{it:delta_equal} Let $\{i,j,k\} = \{0,1,2\}$. If both $\Delta_{ik}(\xx)$ and
    $\Delta_{jk}(\xx)$ are nonzero, then they have the same class
    in $K^\times/K^{\times 2}$.
  \item\label{it:X(k)_disjoint_union} For $\epsilon \in R_K$, let
    \begin{equation*}
      X(K)^{(\epsilon)}:=\{\xx \in X(K) \mid \Delta_{01}(\xx),\Delta_{02}(\xx),\Delta_{12}(\xx) \in \epsilon K^{\times 2} \cup \{0\}\}.
    \end{equation*}
    Then
    \begin{equation*}
      X(K) = \bigcup_{\epsilon \in R_K} X(K)^{(\epsilon)}.
    \end{equation*}
    For distinct $\epsilon,\epsilon' \in R_K$, we have
    \begin{equation*}
      X(K)^{(\epsilon)} \cap X(K)^{(\epsilon')} = X(K)_\Delta:=\{\xx \in X(K) \mid \Delta_{01}(\xx)=\Delta_{02}(\xx)=\Delta_{12}(\xx)=0\}.
    \end{equation*}
  \item\label{it:Sym2_disjoint_union} Let $\xx \in X(K)$ be the image
    of $[\yy,\zz] \in \Sym 2 2 K (K)$ under the isomorphism from
    Proposition~\ref{prop:X_isom_Sym_2_2}. We have
    $\xx \in X(K)^{(1)}$ if and only if $\yy,\zz \in \PP^2(K)$. For
    $\epsilon \ne 1$, we have $\xx \in X(K)^{(\epsilon)}$ if and only
    if $\zz = \overline{\yy} \in \PP^2(K(\sqrt{\epsilon}))$. For
    distinct $\epsilon,\epsilon' \in R_K$, we have
    $\xx \in X(K)^{(\epsilon)} \cap X(K)^{(\epsilon')}$ if and only if
    $\yy = \zz \in \PP^2(K)$.
  \end{enumerate}
\end{lemma}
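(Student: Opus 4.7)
The plan is to leverage the isomorphism $X \cong \Sym 2 2 K$ from Proposition~\ref{prop:X_isom_Sym_2_2} to reduce all three parts to a single explicit computation on unordered pairs. A $K$-point of $\Sym 2 2 K$ is either an unordered pair $[\yy,\zz]$ with $\yy,\zz \in \PP^2(K)$, or an orbit $\{(\yy,\overline{\yy})\}$ of a single point $\yy \in \PP^2(K(\sqrt{\epsilon}))$ under the Galois action, for a uniquely determined $\epsilon \in R_K \setminus \{1\}$. Writing $\xx$ for the image under the map of Proposition~\ref{prop:X_isom_Sym_2_2}, so that $x_i = y_iz_i$ and $x_{ij} = y_iz_j+y_jz_i$, a direct computation gives
\begin{equation*}
\Delta_{ij}(\xx) = x_{ij}^2-4x_ix_j = (y_iz_j+y_jz_i)^2 - 4y_iz_iy_jz_j = (y_iz_j-y_jz_i)^2 =: D_{ij}^2.
\end{equation*}

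I would prove (iii) first. If $\yy,\zz \in \PP^2(K)$, then $D_{ij} \in K$, so $\Delta_{ij}(\xx) \in K^{\times 2} \cup \{0\}$ and $\xx \in X(K)^{(1)}$. If instead $\zz=\overline{\yy}$ with $\yy \in \PP^2(K(\sqrt{\epsilon}))$, then writing $\sigma$ for the nontrivial element of $\Gal(K(\sqrt{\epsilon})/K)$ and taking representatives with $z_k = \sigma(y_k)$, one computes $\sigma(D_{ij}) = \sigma(y_i)y_j-\sigma(y_j)y_i = -D_{ij}$. Hence $D_{ij}$ lies in the $-1$-eigenspace $\sqrt{\epsilon}\cdot K$ of $\sigma$, so $\Delta_{ij}(\xx) = D_{ij}^2 \in \epsilon K^{\times 2} \cup \{0\}$ and $\xx \in X(K)^{(\epsilon)}$. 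For the intersection statement: if $\xx \in X(K)^{(\epsilon)} \cap X(K)^{(\epsilon')}$ with distinct $\epsilon,\epsilon' \in R_K$, the cosets $\epsilon K^{\times 2}$ and $\epsilon' K^{\times 2}$ are disjoint, forcing all $\Delta_{ij}(\xx)$ and hence all $D_{ij}$ to vanish; then $\yy$ and $\zz$ coincide as points of $\PP^2(\overline{K})$, and Galois-invariance of the $K$-rational orbit $\{(\yy,\yy)\}$ places $\yy = \zz \in \PP^2(K)$.

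Parts (i) and (ii) then follow at once. For (i), (iii) implies that every $\xx \in X(K) \setminus X(K)_\Delta$ belongs to exactly one $X(K)^{(\epsilon)}$, so any two nonzero $\Delta_{ij}(\xx)$ lie in the common coset $\epsilon K^{\times 2}$ and therefore agree in $K^\times/K^{\times 2}$. For (ii), surjectivity of the union $\bigcup_{\epsilon} X(K)^{(\epsilon)} = X(K)$ is exactly the statement that (iii) associates an $\epsilon$ to every $[\yy,\zz]$, and the intersection identity repeats the disjointness argument just used.

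The main (mild) obstacle is the bookkeeping around choices of representatives: $D_{ij}$ depends on the affine representatives picked for $\yy$ and $\zz$, but since $\xx$ is well-defined in $\PP^5(K)$ up to $K^\times$-scaling and $\Delta_{ij}$ is homogeneous of degree $2$ in the coordinates on $\PP^5$, the class $\Delta_{ij}(\xx) \in K^\times/K^{\times 2}$ is intrinsic. In the Galois-conjugate case, one should also verify that rescaling $\yy$ by $\lambda \in K(\sqrt{\epsilon})^\times$ and correspondingly rescaling $\zz$ by $\sigma(\lambda)$ multiplies $D_{ij}$ by the norm $\lambda\sigma(\lambda) \in K^\times$, hence preserves membership of $D_{ij}$ in $\sqrt{\epsilon}\cdot K$. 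These checks are routine.
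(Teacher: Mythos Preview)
Your argument is correct and takes a somewhat different route from the paper. The main difference is in part~(i): the paper does not invoke Proposition~\ref{prop:X_isom_Sym_2_2} there at all, but instead regards the cubic $P$ as a quadratic polynomial in $X_{ij}$ and observes that its discriminant equals $\Delta_{ik}\Delta_{jk}$; since a $K$-point of $X$ gives a $K$-root of this quadratic, the discriminant must be a square in $K$, proving~(i) directly. This yields the explicit identity $\Delta_{ik}\Delta_{jk} = (X_{ik}X_{jk}-2X_{ij}X_k)^2$ on $X$, recorded as~\eqref{eq:Delta_square} and reused in Lemmas~\ref{lem:C1-parameterization-1} and~\ref{lem:C1-parameterization-epsilon} to fix compatible sign choices for the square roots $\sqrt{\Delta_{0j}}$. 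Your approach recovers this identity in one more line (expand $D_{ik}D_{jk}$ to see that it equals $X_{ik}X_{jk}-2X_{ij}X_k$), so nothing is lost downstream. For part~(iii) the paper also works through the $\Sym 2 2 K$ description but argues in the opposite direction, reconstructing $[\yy,\zz]$ from $\xx$ by solving the quadratics $x_iT^2+x_{ij}T+x_j=0$ and tracking where the roots live; your forward computation via $\Delta_{ij}=D_{ij}^2$ together with the Galois eigenspace observation $\sigma(D_{ij})=-D_{ij}$ is tidier. One small expository point: as written you establish only the forward implications of the biconditionals in~(iii) and then the intersection clause; the converses do follow from these together with the dichotomy for $K$-points of $\Sym 2 2 K$, but you should say so explicitly.
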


\begin{proof}
  To prove \eqref{it:delta_equal}, regarding (\ref{eq:cubic}) as a
  quadratic polynomial in $X_{ij}$, we observe that its discriminant
  is
  \begin{equation*}
    (-X_{ik}X_{jk})^2-4X_k(X_iX_{jk}^2+X_jX_{ik}^2-4X_0X_1X_2)=\Delta_{ik}\Delta_{jk}.
  \end{equation*}
  For $\xx \in X(K)$ over a field $K$, this discriminant must be a
  square in $K$, and hence $\Delta_{ik}(\xx)$ and $\Delta_{jk}(\xx)$
  must have the same class in $K^\times/K^{\times 2}$ if both are
  nonzero. More explicitly, we observe using \eqref{eq:cubic} that
  \begin{equation}\label{eq:Delta_square}
    \Delta_{ik}\Delta_{jk} = (X_{ik}X_{jk}-2X_{ij}X_k)^2.
  \end{equation}

  The statements in \eqref{it:X(k)_disjoint_union} follow directly
  from \eqref{it:delta_equal}.
  
  For any distinct $i,j \in \{0,1,2\}$, we have an isomorphism
  $\Symm^2(\PP^1_K) \to \PP^2_K$ given by
  \begin{equation*}
    [(y_i:y_j),(z_i:z_j)] \mapsto (x_i:x_{ij}:x_j):=(y_iz_i:y_iz_j+y_jz_i:y_jz_j).
  \end{equation*}
  It corresponds to the polynomial equation
  \begin{equation*}
    (y_iX+y_j)\cdot(z_iX+z_j)=y_iz_iX^2+(y_iz_j+y_jz_i)X+y_jz_j;
  \end{equation*}
  here, $(x_i:x_{ij}:x_j) \in \PP^2(K)$ if and only if
  $[(y_i:y_j),(z_i:z_j)]$ is an unordered pair of points over $K$ or
  of conjugate points over a quadratic extension
  $K(\sqrt{\epsilon})$. Given $(x_i:x_{ij}:x_j) \in \PP^2(K)$, let
  $\Delta_{ij} = x_{ij}^2-4x_ix_j \in K$ be the discriminant of our
  quadratic equation. We can recover $[(y_i:y_j),(z_i:z_j)]$ by
  determining the roots of $x_iX^2+x_{ij}X+x_j$; we obtain a pair of
  points over $K$ if and only if
  $\Delta_{ij} \in K^{\times 2} \cup \{0\}$, and a pair of points
  conjugate over $K(\sqrt{\epsilon})$ if and only if
  $\Delta_{ij} \in \epsilon K^{\times 2} \cup \{0\}$.

  Together, we obtain $[\yy,\zz] = [(y_0:y_1:y_2),(z_0:z_1:z_2)]$,
  where $\yy,\zz$ are either both defined over $K$ or conjugate to
  each other over $K(\sqrt{\epsilon})$ if we make the choices for the
  three pairs of indices $i,j$ in a compatible way. Here, the most
  interesting case is $x_0=y_0=z_0=1$, where we obtain
  \begin{equation*}
    y_j=\frac{x_{0j}+\sqrt{\Delta_{0j}}}{2},\quad 
    z_j=\frac{x_{0j}-\sqrt{\Delta_{0j}}}{2}
  \end{equation*}
  for $j \in \{1,2\}$. Considering $x_{12}=y_1z_2+y_2z_1$ shows that
  we must choose $\sqrt{\Delta_{01}}$ and $\sqrt{\Delta_{02}}$ (which
  cannot lead to different quadratic extensions of $K$ because of
  \eqref{it:delta_equal}) in a compatible way as follows:
  $\sqrt{\Delta_{01}}\sqrt{\Delta_{02}} = x_{01}x_{02}-2x_{12}$. The
  other cases are similar or easier.

  Every $\xx \in X(K)$ is the image of $[\yy,\zz]$ with
  $\yy,\zz \in \PP^2(K)$ or
  $\zz = \overline{\yy} \in \PP^2(K(\sqrt{\epsilon}))$ over some
  quadratic extension $K(\sqrt{\epsilon})$ of $K$. In the first case,
  our discussion above together with \eqref{it:delta_equal} shows that
  all $\Delta_{ij}(\xx) \in K^{\times 2} \cup \{0\}$; in the second
  case, it shows that all
  $\Delta_{ij}(\xx) \in \epsilon K^{\times 2} \cup \{0\}$. The
  intersection of both cases is clearly $\yy=\zz \in \PP^2(K)$ (which
  is the diagonal of $\Sym 2 2 K$), which is equivalent to
  $\Delta_{ij}(\xx) = 0$ for all $i,j$.
\end{proof}

\begin{remark}\label{rem:compare_notation_Q}
  Comparing (\ref{eq:def_X_0+-}) with the notation in
  Lemma~\ref{lem:X(k)_components}, we observe that
  \begin{itemize}
  \item $X_0 = X(\QQ)^{(1)}$,
   \item $X_+$ is the disjoint union of
    $X(\QQ)^{(d)} \setminus X(\QQ)_\Delta$ for positive squarefree
    $d \ne 1$,
  \item $X_-$ is the disjoint union of these sets for negative
    squarefree $d$.
 \end{itemize}
\end{remark}

\begin{lemma}\label{lem:openness}
  Consider $v \in \Omega_\QQ$ (the set of places of $\QQ$). For
  $\epsilon \in R_{\QQ_v}$, the set
  $X(\QQ_v)^{(\epsilon)} \setminus X(\QQ_v)_\Delta$ is an open subset
  of $X(\QQ_v)$ in the $v$-adic topology.
\end{lemma}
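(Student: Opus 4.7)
The plan is to show that every point of $X(\QQ_v)^{(\epsilon)}\setminus X(\QQ_v)_\Delta$ has an open neighborhood in $X(\QQ_v)$ that is still contained in this set. The whole argument is built on two ingredients: the continuity of the polynomial functions $\Delta_{ij}$, and the fact that $\epsilon\QQ_v^{\times 2}$ is an open subset of $\QQ_v$ for every $v\in\Omega_\QQ$ and every $\epsilon\in R_{\QQ_v}$. Together with Lemma~\ref{lem:X(k)_components}\eqref{it:delta_equal}, which forces the nonzero $\Delta_{ij}$ to lie in one common square class, these should give a clean local argument, so no step looks really hard.

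First, I would record the openness of $\epsilon\QQ_v^{\times 2}$ in $\QQ_v$. At $v=\infty$ this is just $\RR_{>0}$ or $\RR_{<0}$; at a nonarchimedean $v$ one uses that $\QQ_v^{\times 2}$ is an open subgroup of finite index in $\QQ_v^\times$ (Hensel's lemma: it contains $1+4p\ZZ_v$ for $p\ne 2$ and $1+8\ZZ_2$ for $v=2$), and $\QQ_v^\times$ is open in $\QQ_v$, so each coset $\epsilon\QQ_v^{\times 2}$ is open.

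Now fix $\xx\in X(\QQ_v)^{(\epsilon)}\setminus X(\QQ_v)_\Delta$. By definition of $X(\QQ_v)_\Delta$ there is a pair of distinct indices $(i_0,j_0)$ with $\Delta_{i_0j_0}(\xx)\ne 0$, and by the definition of $X(\QQ_v)^{(\epsilon)}$ we then have $\Delta_{i_0j_0}(\xx)\in\epsilon\QQ_v^{\times 2}$. Since $\Delta_{i_0j_0}\colon X(\QQ_v)\to\QQ_v$ is continuous and $\epsilon\QQ_v^{\times 2}$ is open, there is an open neighborhood $U\subset X(\QQ_v)$ of $\xx$ on which $\Delta_{i_0j_0}$ takes values in $\epsilon\QQ_v^{\times 2}$, in particular nonzero values. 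Every $\xx'\in U$ already satisfies $\xx'\notin X(\QQ_v)_\Delta$.

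It remains to check that $\xx'\in X(\QQ_v)^{(\epsilon)}$ for all $\xx'\in U$, i.e.\ that the other two $\Delta_{ij}(\xx')$ also lie in $\epsilon\QQ_v^{\times 2}\cup\{0\}$. For each of the two remaining pairs, the pair and $(i_0,j_0)$ share a common index, so Lemma~\ref{lem:X(k)_components}\eqref{it:delta_equal} applies: if $\Delta_{ij}(\xx')\ne 0$, then it lies in the same class in $\QQ_v^\times/\QQ_v^{\times 2}$ as $\Delta_{i_0j_0}(\xx')\in\epsilon\QQ_v^{\times 2}$, hence itself belongs to $\epsilon\QQ_v^{\times 2}$. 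This gives $U\subset X(\QQ_v)^{(\epsilon)}\setminus X(\QQ_v)_\Delta$ and finishes the proof.
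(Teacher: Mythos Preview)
Your proof is correct and follows essentially the same approach as the paper: pick a nonzero $\Delta_{i_0j_0}(\xx)\in\epsilon\QQ_v^{\times 2}$, use openness of $\epsilon\QQ_v^{\times 2}$ and continuity to preserve this on a neighborhood, then invoke Lemma~\ref{lem:X(k)_components}\eqref{it:delta_equal} to control the remaining $\Delta_{ij}$. The paper simply cites Serre for the openness of $\QQ_v^{\times 2}$ where you sketch the Hensel argument (incidentally, $1+p\ZZ_p$ already suffices for odd $p$), and one should strictly speak of $\Delta_{i_0j_0}$ as a continuous function only after fixing an affine chart, but this is implicit in both arguments.
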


\begin{proof}
  Let $\xx \in X(\QQ_v)^{(\epsilon)} \setminus X(K)_\Delta$. Since
  $\QQ_v^{\times 2}$ is open in $\QQ_v$ \cite[\S II.3.3]{MR344216},
  the same holds for $\epsilon \QQ_v^{\times 2}$. Therefore, if
  $\Delta_{ij}(\xx) \in \epsilon \QQ_v^{\times 2}$ (which holds for at
  least one pair of indices $i,j$ outside $X(\QQ_v)_\Delta$), then any
  $\xx' \in X(\QQ_v)$ sufficiently close to $\xx$ also satisfies
  $\Delta_{ij}(\xx') \in \epsilon \QQ_v^{\times 2}$. If
  $\Delta_{ik}(\xx)=0$, then we have either $\Delta_{ik}(\xx')=0$ or
  $\Delta_{ik}(\xx')=\epsilon \QQ_v^{\times 2}$ (by
  Lemma~\ref{lem:X(k)_components}\eqref{it:delta_equal} using
  $\Delta_{ij}(\xx') \in \epsilon \QQ_v^{\times 2}$). In total,
  $\xx' \in X(\QQ_v)^{(\epsilon)} \setminus X(K)_\Delta$.
\end{proof}

Let $v \in \Omega_\QQ$. For any $x \in \QQ_v^{\times 2}$, we can choose
$\sqrt{x}$ among the two numbers in $\QQ_v$ (differing by sign) whose
square is $x$ as follows:
\begin{itemize}
\item For $v=\infty$, we choose $\sqrt{x} \in M_\infty:=\RR_{>0}$.
\item For $v=p \ne 2$, we choose
  \begin{equation*}
    \sqrt{x} \in M_p:=\left\{z \in \QQ_p^\times : |z|_p^{-1}z \equiv a \pmod p\text{ with }a \in \{1,\dots,\tfrac{p-1}2\}\right\}.
  \end{equation*}
\item For $v=2$, we choose $\sqrt{x} \in M_2:=\{z \in \QQ_2^\times : |z|_2^{-1}z \equiv 1 \pmod 4\}$.
\end{itemize}
In other words, we choose $\sqrt{x} \in \QQ_p$ such that the first
nontrivial digit in its $p$-adic expansion is as small as possible in
$\{0,\dots,p-1\}$. In any case, $M_v$ is an open subset of $\QQ_v$.

\begin{lemma}\label{lem:C1-parameterization-1}
  Consider the Zariski-open subset
  \begin{equation}\label{eq:U_0}
    U_0 = \{X_0 \ne 0,\ \Delta_{01} \ne 0,\ \Delta_{02} \ne 0\}
  \end{equation}
  of $X$. The sets
  \begin{equation*}
    U_0^{(1)}:=U_0(\QQ_v) \cap X(\QQ_v)^{(1)},\ Z_0^{(1)}:=\{(y_1,y_2,z_1,z_2) \in \QQ_v^{\oplus 4} : y_1 \ne z_1,\ y_2-z_2 \in M_v\}
  \end{equation*}
  are open subsets of $X(\QQ_v)$, $\QQ_v^{\oplus 4}$, respectively, in
  the $v$-adic topology. The map
  \begin{equation*}
    (y_1,y_2,z_1,z_2) \mapsto (1:y_1z_1:y_2z_2:y_1+z_1:y_2+z_2:y_1z_2+y_2z_1)
  \end{equation*}
  defines a bijection $Z_0^{(1)} \to U_0^{(1)}$ with inverse
  \begin{equation*}
    (1:x_1:x_2:x_{01}:x_{02}:x_{12}) \mapsto \left(\tfrac{x_{01}+\sqrt{\Delta_{01}}}{2},\tfrac{x_{02}+\sqrt{\Delta_{02}}}{2},\tfrac{x_{01}-\sqrt{\Delta_{01}}}{2},\tfrac{x_{02}-\sqrt{\Delta_{02}}}{2}\right),
  \end{equation*}
  where we choose $\sqrt{\Delta_{02}} = \sqrt{x_{02}^2-4x_2}$ in $M_v$,
  and $\sqrt{\Delta_{01}} = \sqrt{x_{01}^2-4x_1}$ such that
  $\sqrt{\Delta_{01}}\sqrt{\Delta_{02}} =
  x_{01}x_{02}-2x_{12}$. Locally in the $v$-adic topology, both maps
  are $C^1$-maps.
\end{lemma}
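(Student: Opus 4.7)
The plan is to break the lemma into three claims: openness of the sets, the bijection with the stated inverse formula, and local $C^1$-regularity.

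Openness of $U_0^{(1)}$ is immediate: since $U_0$ is Zariski-open in $X$, its $\QQ_v$-points form a $v$-adically open subset of $X(\QQ_v)$; on $U_0$ the discriminants $\Delta_{01},\Delta_{02}$ are nonzero, so $U_0(\QQ_v)\cap X(\QQ_v)_\Delta=\emptyset$, whence $U_0^{(1)}=U_0(\QQ_v)\cap(X(\QQ_v)^{(1)}\setminus X(\QQ_v)_\Delta)$ is open by Lemma~\ref{lem:openness}. Openness of $Z_0^{(1)}$ in $\QQ_v^{\oplus 4}$ is similarly clear: $y_1\ne z_1$ is an open condition, and $M_v$ is open in $\QQ_v$ by construction.

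For the bijection, the forward direction is a direct calculation: the image of $(y_1,y_2,z_1,z_2)\in Z_0^{(1)}$ has $\Delta_{01}=(y_1-z_1)^2$ and $\Delta_{02}=(y_2-z_2)^2$, both nonzero squares in $\QQ_v$, so it lies in $U_0^{(1)}$. For the inverse, given $\xx\in U_0^{(1)}$ both $\Delta_{01}(\xx)$ and $\Delta_{02}(\xx)$ are nonzero squares; choose $\sqrt{\Delta_{02}}\in M_v$, and use the identity $\Delta_{01}\Delta_{02}=(x_{01}x_{02}-2x_{12})^2$ from \eqref{eq:Delta_square} to define $\sqrt{\Delta_{01}}:=(x_{01}x_{02}-2x_{12})/\sqrt{\Delta_{02}}$, which is a legitimate square root of $\Delta_{01}$ obeying the required sign compatibility. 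The same short computation as at the end of the proof of Lemma~\ref{lem:X(k)_components} then yields $y_j+z_j=x_{0j}$, $y_jz_j=x_j$, and $y_1z_2+y_2z_1=x_{12}$; moreover $y_2-z_2=\sqrt{\Delta_{02}}\in M_v$ and $y_1-z_1=\sqrt{\Delta_{01}}\ne 0$, so the inverse lands in $Z_0^{(1)}$, and the two maps are manifestly mutual inverses.

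For the local $C^1$-regularity, the forward map is polynomial, hence globally $v$-adically analytic. The inverse is rational in the $x_i,x_{ij}$ apart from the single non-algebraic ingredient $\xx\mapsto\sqrt{\Delta_{02}(\xx)}\in M_v$, so it suffices to check that the branch of square root landing in $M_v$ is locally analytic on $\QQ_v^{\times 2}$. For $v=\infty$ this is standard; for $v=p$ it follows from Hensel's lemma applied to $Y^2-t$ at the chosen root in $M_p$, on a sufficiently small neighborhood of each $t\in\QQ_p^{\times 2}$ (at $p=2$ one invokes the strong form $|g(a)|_2<|g'(a)|_2^2$, which holds after localizing to each coset of $\QQ_2^{\times 2}$). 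The remaining rational operations in the inverse have the nonvanishing denominator $\sqrt{\Delta_{02}}$, so the inverse is locally $C^1$ (in fact locally $\QQ_v$-analytic). The main obstacle is the compatible bookkeeping of square roots and their chosen branches, rather than any substantial analytic issue.
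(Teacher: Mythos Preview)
Your proof is correct and follows essentially the same route as the paper: both invoke Lemma~\ref{lem:openness} and the openness of $M_v$ for the first claim, use Vieta's relations $(X+y_i)(X+z_i)=X^2+x_{0i}X+x_i$ together with the identity \eqref{eq:Delta_square} for the bijection and the sign compatibility of $\sqrt{\Delta_{01}}$, and appeal to local analyticity of the square root for the $C^1$ claim. The only cosmetic difference is that the paper phrases the $C^1$ argument as ``the choice depends only on the first nontrivial $p$-adic digit'', while you spell out the Hensel-lemma justification; these amount to the same thing.
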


\begin{proof}
  Since $\Delta_{0i} \ne 0$ on $U_0$, Lemma~\ref{lem:openness} implies
  that $U_0^{(1)}$ is indeed open. Since $M_v$ is open, the same holds
  for $Z_0^{(1)}$. By construction of $M_v$ and because of
  \eqref{eq:Delta_square}, $\sqrt{\Delta_{02}}$ and
  $\sqrt{\Delta_{01}}$ are well-defined. To check that these maps are
  inverse to each other on the given sets, we argue as in the proof of
  Lemma~\ref{lem:X(k)_components}\eqref{it:Sym2_disjoint_union}: We
  note that $x_i=y_iz_i$ and $x_{0i}=y_i+z_i$ imply the equality
  $(X+y_i)(X+z_i)=X^2+x_iX+x_{0i}$ of polynomials; hence $y_i$ and
  $z_i$ must be $(x_{0i}\pm\sqrt{\Delta_{0i}})/2$ for an appropriate
  choice of square roots and signs, which is determined by considering
  $x_{12}=y_1z_2+y_2z_1$. The conditions in $U_0$ and $Z_0^{(1)}$ also
  match because of
  \begin{equation*}
    \Delta_{0i}=x_{0i}^2-4x_0x_i = (y_i+z_i)^2-4y_iz_i = (y_i-z_i)^2.
  \end{equation*}
  Since the choices of $y_2-z_2$ and of $\sqrt{\Delta_{02}}$ depend
  only on the first nontrivial digit in the $p$-adic case by
  definition of $M_v$, these constructions lead to maps that are
  locally $C^1$.
\end{proof}

Next, we consider $\epsilon \ne 1$ in $R_{\QQ_v}$.

\begin{lemma}\label{lem:C1-parameterization-epsilon}
  Let $v \in \Omega_\QQ$ and $\epsilon \ne 1$ in $R_{\QQ_v}$. The sets
  \begin{equation*}
    U_0^{(\epsilon)}:=U_0(\QQ_v) \cap X(\QQ_v)^{(\epsilon)},\
    Z_0^{(\epsilon)}:=\{(a_1,b_1,a_2,b_2) \in \QQ_v^{\oplus 4} : b_1 \ne 0,\ 2b_2 \in M_v\}
  \end{equation*}
  are open subsets of $X(\QQ_v)$, $\QQ_v^{\oplus 4}$, respectively, in
  the $v$-adic topology. The map
  \begin{equation*}
    (a_1,b_1,a_2,b_2) \mapsto (1:a_1^2-\epsilon b_1^2:a_2^2-\epsilon b_2^2:2a_1:2a_2:2(a_1a_2-\epsilon b_1b_2))
  \end{equation*}
  defines a bijection $Z_0^{(\epsilon)} \to U_0^{(\epsilon)}$ with inverse
  \begin{equation*}
    (1:x_1:x_2:x_{01}:x_{02}:x_{12}) \mapsto \left(\tfrac{x_{01}}{2},\tfrac{\sqrt{\Delta_{01}\epsilon^{-1}}}{2},\tfrac{x_{02}}{2},\tfrac{\sqrt{\Delta_{02}\epsilon^{-1}}}{2}\right),
  \end{equation*}
  where we choose $\sqrt{\Delta_{02}\epsilon^{-1}}$ in $M_v$, and
  $\sqrt{\Delta_{01}\epsilon^{-1}}$ such that
  $\epsilon\sqrt{\Delta_{01}\epsilon^{-1}}\sqrt{\Delta_{02}\epsilon^{-1}}
  = x_{01}x_{02}-2x_{12}$.  Locally in the $v$-adic topology, both
  maps are $C^1$-maps.
\end{lemma}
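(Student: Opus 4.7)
The plan is to follow the structure of Lemma~\ref{lem:C1-parameterization-1} line by line, but with conjugate pairs in the quadratic extension $\QQ_v(\sqrt{\epsilon})/\QQ_v$ in place of pairs of $\QQ_v$-points. By Lemma~\ref{lem:X(k)_components}\eqref{it:Sym2_disjoint_union}, every $\xx \in U_0^{(\epsilon)}$ is the image under the map of Proposition~\ref{prop:X_isom_Sym_2_2} of a unique unordered pair $[\yy,\overline{\yy}]$ of conjugate $\QQ_v(\sqrt{\epsilon})$-points of $\PP^2$; normalizing $y_0=z_0=1$ and writing $\yy = (1 : a_1+b_1\sqrt{\epsilon} : a_2+b_2\sqrt{\epsilon})$, substitution into that map gives the six affine coordinates $(a_1^2-\epsilon b_1^2, a_2^2-\epsilon b_2^2, 2a_1, 2a_2, 2(a_1a_2-\epsilon b_1b_2))$, which is the claimed forward formula. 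A direct computation yields $\Delta_{0i}(\xx) = (y_i-\overline{y_i})^2 = 4\epsilon b_i^2$, confirming that the image lies in $X(\QQ_v)^{(\epsilon)}$ and that $\Delta_{0i}(\xx) \ne 0$ iff $b_i \ne 0$.

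For openness, $\Delta_{01}$ and $\Delta_{02}$ are nonzero on $U_0$, so $U_0^{(\epsilon)}$ lies in $X(\QQ_v)^{(\epsilon)} \setminus X(\QQ_v)_\Delta$ and is open in $X(\QQ_v)$ by Lemma~\ref{lem:openness} intersected with the Zariski-open $U_0(\QQ_v)$; the set $Z_0^{(\epsilon)}$ is open because $M_v$ is open and $\{b_1 \ne 0\}$ is open. To verify that the displayed inverse map is well-defined and truly inverts the forward map, I use that on $U_0^{(\epsilon)}$ the ratio $\Delta_{0i}(\xx)/\epsilon$ lies in $\QQ_v^{\times 2}$, so $\sqrt{\Delta_{0i}\epsilon^{-1}} = \pm 2b_i$ exists in $\QQ_v$; the bijection then reduces to factoring $X^2 - x_{0i}X + x_i = (X-a_i)^2 - \epsilon b_i^2$ over $\QQ_v(\sqrt{\epsilon})$, exactly parallel to the factorization used in Lemma~\ref{lem:C1-parameterization-1}.

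The one step requiring genuine care is the sign-matching for the two square roots, and this is the main obstacle I expect. The $M_v$-convention pins down $\sqrt{\Delta_{02}\epsilon^{-1}}$ (and in particular forces $2b_2 \in M_v$), while the product identity $\epsilon\sqrt{\Delta_{01}\epsilon^{-1}}\sqrt{\Delta_{02}\epsilon^{-1}} = x_{01}x_{02} - 2x_{12}$, whose consistency is supplied by \eqref{eq:Delta_square}, pins down $\sqrt{\Delta_{01}\epsilon^{-1}}$; the argument works uniformly at $v=2$, odd $v$, and $v=\infty$ since the sign-selection mechanism built into each $M_v$ is of the same nature. Finally, local $C^1$-regularity of both maps follows as in Lemma~\ref{lem:C1-parameterization-1}: the forward map is polynomial in $(a_1,b_1,a_2,b_2)$, and the inverse is rational in the coordinates composed with the $M_v$-selected square roots, which depend smoothly on the first nontrivial $p$-adic digit (respectively on the real sign).
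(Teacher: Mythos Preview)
Your proposal is correct and follows essentially the same approach as the paper: the paper's proof also identifies $\QQ_v(\sqrt{\epsilon})$ with $\QQ_v^{\oplus 2}$ via the basis $1,\sqrt{\epsilon}$, writes $y_i=a_i+b_i\sqrt{\epsilon}$ with $z_i=\overline{y_i}$, and then declares the result analogous to Lemma~\ref{lem:C1-parameterization-1}, giving the same forward and inverse maps and the same square-root conventions (phrased there as $\sqrt{\Delta_{02}}\in M_v\sqrt{\epsilon}$). Your write-up simply unpacks more of the verification (the computation $\Delta_{0i}=4\epsilon b_i^2$, the openness argument, the sign-matching via \eqref{eq:Delta_square}) than the paper does explicitly.
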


\begin{proof}
  We understand $\QQ_v(\sqrt{\epsilon})$ as
  $\QQ_v[X]/(X^2-\epsilon)$, where
  $\sqrt\epsilon$ is the class of
  $X$. We identify $\QQ_v(\sqrt{\epsilon})$ with $\QQ_v^{\oplus
    2}$ via the basis $1,\sqrt\epsilon$.  With
  $z_i=\overline{y_i}$ for $y_i=a_i+b_i\sqrt\epsilon \in
  \QQ_v(\sqrt\epsilon) \cong \QQ_v^{\oplus
    2}$, this is analogous to Lemma~\ref{lem:C1-parameterization-1}.
  Note that $Z_0^{(\epsilon)}$ corresponds to
  \begin{equation*}
    \{(y_1,y_2) \in \QQ_v(\sqrt\epsilon)^{\oplus 2} : y_1 \ne \overline{y_1},\ y_2-\overline{y_2}\in M_v\},
  \end{equation*}
  and the map corresponds to
  \begin{equation*}
    (y_1,y_2) \mapsto (1:y_1\overline{y_1}:y_2\overline{y_2}:y_1+\overline{y_1}:y_2+\overline{y_2}:y_1\overline{y_2}+y_2\overline{y_1}),
  \end{equation*}
  with inverse
  \begin{equation*}
    (1:x_1:x_2:x_{01}:x_{02}:x_{12}) \mapsto \left(\tfrac{x_{01}+\sqrt{\Delta_{01}}}{2},\tfrac{x_{02}+\sqrt{\Delta_{02}}}{2}\right),
  \end{equation*}
  where we choose $\sqrt{\Delta_{02}} = \sqrt{x_{02}^2-4x_2} \in M_v\sqrt{\epsilon}$
  and $\sqrt{\Delta_{01}} = \sqrt{x_{01}^2-4x_1}$ such that
  $\sqrt{\Delta_{01}}\sqrt{\Delta_{02}} = x_{01}x_{02}-2x_{12}$.
\end{proof}

\section{The asymptotic formula}\label{sec:asymptotic}

In this section, we prove Theorem~\ref{thm:main}. As discussed before
Lemma~\ref{lem:X(k)_components}, we can write $\Sym 2 2 \QQ(\QQ)$ as
the disjoint union $S_0 \cup S_+ \cup S_-$, where
\begin{itemize}
\item $S_0$ consists of unordered pairs $[\yy,\zz]$ of rational points
  $\yy,\zz \in \PP^2_\QQ(\QQ)$,
\item $S_+$ consists of unordered pairs of conjugate points
  $[\xx, \overline{\xx}]$ with $\xx \in \PP^2_\QQ(\QQ(\xx))$, where
  $\QQ(\xx)$ is a real quadratic field, and
\item $S_-$ consists of unordered pairs of conjugate points
  $[\xx, \overline{\xx}]$ with $\xx \in \PP^2_\QQ(\QQ(\xx))$, where
  $\QQ(\xx)$ is an imaginary quadratic field.
\end{itemize}
Let $\epsilon \in \{0,+,-\}$. By
Lemma~\ref{lem:X(k)_components}\eqref{it:Sym2_disjoint_union} and
Remark~\ref{rem:compare_notation_Q}, $X_\epsilon \subset X(\QQ)$ as in
\eqref{eq:def_X_0+-} is the image of the points
$[\yy,\zz]=[(y_0:y_1:y_2),(z_0:z_1:z_2)] \in S_\epsilon$ under the map
\begin{equation*}
  [\yy,\zz] \mapsto P_{[\yy,\zz]} := (y_0z_0:y_1z_1:y_2z_2:y_0z_1+y_1z_0:y_0z_2+y_2z_0:y_1z_2+y_2z_1)
\end{equation*}
induced by our isomorphism $\Symm^2(\PP^2_\QQ) \to X$ from
Proposition~\ref{prop:X_isom_Sym_2_2}.

The following result proves Theorem~\ref{thm:main} and gives the
values of the leading constants:

\begin{prop}\label{prop:asymptotic_formula}
  For $\epsilon \in \{0,+,-\}$, we have
  \begin{equation*}
    N(X_\epsilon,H,B) = c_\epsilon B \log B +
    \begin{cases}
      O(B), & \epsilon = 0,\\
      O(B(\log B)^{1/2}), & \epsilon \in \{+,-\},
    \end{cases}
  \end{equation*}
  with
  \begin{equation*}
    c_0 =\frac{c_{18}(2)}{24\zeta(3)^2}, \qquad c_+ = \frac{1}{9\zeta(3)^2}\cdot \frac{3\vol(S_1^+(e))}{4},\qquad c_- =
    \frac{1}{9\zeta(3)^2}\cdot \frac{6\vol(S_1^-)}{\pi},
  \end{equation*}
  where
  \begin{align*}
    S_1^+(e)&=\left\{(y_0,y_1,y_2,z_0,z_1,z_2) \in \RR^6 :
              \begin{aligned}
                &|y_0z_0|, |y_1z_1|, |y_2z_2|, |y_0z_1+y_1z_0| \le 1\\
                &|y_0z_2+y_2z_0|, |y_1z_2+y_2z_1| \le 1,\\
                &e^{-1} < \frac{\max\{|y_0|,|y_1|,|y_2|\}}{\max\{|z_0|,|z_1|,|z_2\}} \le
                1
              \end{aligned}
                  \right\},\\
    S_1^- &= \left\{(\xi_0+i\eta_0, \xi_1+i\eta_1, \xi_2+i\eta_2)\in \CC^{\oplus 3} :
            \begin{aligned}
              &\xi_j^2+\eta_j^2 \le 1\ (\forall j),\\
              &2|\xi_j\xi_k+\eta_j\eta_k|\le 1\ (\forall j \ne k)
            \end{aligned}
                \right\},
  \end{align*}
  and $c_{18}(2)$ is the leading constant of the volume of the (nonhomogeneously
  expanding) set
  \begin{equation*}
    T(X) = \left\{
      \begin{aligned}
        &(y_0,y_1,y_2,z_0,z_1,z_2) \in \RR^6 : |y_0z_0|, \dots,
        |y_1z_2+y_2z_1| \le X,\\ &\max\{|y_0|,|y_1|,|y_2|\} \ge 1,
        \max\{|z_0|,|z_1|,|z_2\} \ge 1
      \end{aligned}
    \right\},
  \end{equation*}
  (with $\vol(T(X)) = c_{18}(2) X^3\log X+O(X^3)$; see \cite[Theorem~4a,
  Lemma~13]{MR1330740}).
\end{prop}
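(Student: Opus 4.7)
The plan is to translate the counting problem on $X_\epsilon$ into one about decomposable ternary quadratic forms over $\QQ$ via Proposition~\ref{prop:X_isom_Sym_2_2}, and then invoke Schmidt's results from \cite[Theorem~4a, Lemma~13]{MR1330740}.

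\textbf{Translation step.} Every unordered pair $[\yy, \zz] \in \Sym 2 2 \QQ(\QQ)$ determines the decomposable ternary quadratic form
\[
  f_{[\yy, \zz]}(T_0, T_1, T_2) = (y_0 T_0 + y_1 T_1 + y_2 T_2)(z_0 T_0 + z_1 T_1 + z_2 T_2),
\]
whose six coefficients are precisely the six projective coordinates of $P_{[\yy, \zz]}$ in the order listed. Hence $H(P_{[\yy, \zz]}) = \|f_{[\yy, \zz]}\|_\infty^3$ computed on a primitive integral representative of the coefficient vector. By Lemma~\ref{lem:X(k)_components}\eqref{it:Sym2_disjoint_union} and Remark~\ref{rem:compare_notation_Q}, the trichotomy $X(\QQ) = X_0 \cup X_+ \cup X_-$ matches the trichotomy of nonzero decomposable ternary forms (up to scaling) according to whether $f$ has two rational linear factors, is irreducible with positive non-square discriminant, or is irreducible with negative discriminant. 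This reduces Theorem~\ref{thm:main} to counting primitive integer decomposable ternary quadratic forms of each type, ordered by the sup-norm of the coefficient vector.

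\textbf{Counting step.} In each case, parametrize by primitive integer vectors $\yy, \zz$ (over $\ZZ$ for $X_0$, over $\OO_K$ with $K = \QQ(\sqrt d)$ for $X_\pm$), mod out by sign/swap/unit-group symmetries, and apply Möbius inversion on the $\gcd$ of the product coefficients to enforce primitivity of the form; the two independent Möbius inversions on the parametrizing vectors produce the $\zeta(3)^{-2}$ factor in each $c_\epsilon$. For $X_0$, the remaining count is a lattice point count over Schmidt's non-homogeneously expanding region $T(B^{1/3})$ with leading volume $c_{18}(2) B \log B + O(B)$ by \cite[Lemma~13]{MR1330740}; dividing by the combined symmetry factor $24$ yields $c_0$. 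For $X_\pm$, after fixing $K$ and applying Schmidt's Theorem~4a, the count becomes a lattice point count over a homogeneously expanding region in $\RR^6$ whose main term converges to $\vol(S_1^\pm)$ times an arithmetic factor obtained by summing over squarefree $d$ of the appropriate sign. The explicit constants $6/\pi$ in $c_-$ (from the finite group $\OO_K^\times$) and $3/4$ in $c_+$ (from a Dirichlet-type fundamental domain for the infinite $\OO_K^\times$-action, implemented as $e^{-1} < \max|y_i|/\max|z_i| \le 1$) then follow.

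The main obstacle will be the careful bookkeeping: reconciling our direct unordered-pair parametrization with Schmidt's conventions for orbit counting, tracking the sign, swap, and unit-group symmetry factors that yield the denominators $24$ for $c_0$ and $9$ for $c_\pm$, and applying Möbius inversion uniformly across the three cases so as to obtain the $\zeta(3)^{-2}$ factors with no hidden discrepancies. The error terms $O(B)$ for $X_0$ and $O(B(\log B)^{1/2})$ for $X_\pm$ are then inherited directly from Schmidt's error estimates in \cite[Theorem~4a]{MR1330740}.
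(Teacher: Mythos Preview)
Your translation step is exactly the paper's proof: identify $X_\epsilon$ with the decomposable ternary quadratic forms counted by Schmidt's $Z_8^\epsilon(2,\cdot)$, observe $H(P_{[\yy,\zz]}) = H(f_{[\yy,\zz]})^3$, hence $N(X_\epsilon,H,B)=Z_8^\epsilon(2,B^{1/3})$, and cite \cite[Theorem~4a]{MR1330740}.

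Your counting step, however, is superfluous. Schmidt's Theorem~4a is a finished asymptotic: it already states $Z_8^0(2,X)$, $Z_8^+(2,X)$, $Z_8^-(2,X)$ with exactly the leading constants $\tfrac{c_{18}(2)}{24\zeta(3)^2}$, $\tfrac{1}{9\zeta(3)^2}\cdot\tfrac{3\vol(S_1^+(e))}{4}$, $\tfrac{1}{9\zeta(3)^2}\cdot\tfrac{6\vol(S_1^-)}{\pi}$ and the stated error terms. The M\"obius inversions, the symmetry-factor bookkeeping producing $24$ and $9$, the lattice-point count over $T(X)$, the fundamental domain for $\OO_K^\times$, and the sum over discriminants are all internal to Schmidt's proof, not something you need to redo here. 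Substituting $X=B^{1/3}$ is the only remaining step. So what you flag as ``the main obstacle'' (the careful bookkeeping) is not an obstacle at all---it is entirely absorbed by the citation.
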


\begin{proof}
  As explained around \cite[(1.9)]{MR1330740}, $\Sym 2 2 \QQ(\QQ)$ is
  in bijection to the set of nonzero quadratic forms
  $f \in \QQ[X_0,X_1,X_2]$ that are the product of two linear forms
  with algebraic coefficients. For each such $f$, let $H(f)$ be the
  exponential Weil height of the coefficient vector of $f$ considered
  as an element of $\PP^5(\QQ)$, and let $\QQ(f)$ be the field of
  definition of the linear forms. The map is given by
  \begin{equation*}
    [\yy,\zz]\mapsto f_{[\yy,\zz]}:=(y_0X_0+y_1X_1+y_2X_2)\cdot(z_0X_0+z_1X_2+z_2X_2).
  \end{equation*}
  Then $Z_8(2,X)$ is defined as the number of such decomposable quadratic
  forms with $H(f) \le X$, where proportional forms are counted as one. Let
  $Z_8^+(2,X)$, $Z_8^-(2,X)$, $Z_8^0(2,X)$ be the number of such forms $f$
  with $\QQ(f)$ real quadratic, imaginary quadratic, rational,
  respectively. For $\epsilon \in \{0,+,-\}$, we observe that the forms counted by
  $Z_8^\epsilon(2,X)$ correspond to $S_\epsilon$.
  Clearly $H(f_{[\yy,\zz]})^3 = H(P_{[\yy,\zz]})$. Therefore,
  $N(X_\epsilon, H, B) = Z_8^\epsilon(2,B^{1/3})$. Hence the result follows
  from \cite[Theorem~4a]{MR1330740}.
\end{proof}

\section{The expected asymptotic formula}\label{sec:expectation}

We identify $\Sym 2 2 \QQ$ with our singular cubic fourfold $X$
defined by (\ref{eq:cubic}) using the isomorphism described in
Proposition~\ref{prop:X_isom_Sym_2_2}. To interpret the asymptotic
formula in Proposition~\ref{prop:asymptotic_formula}, we must consider
its desingularization, which is the Hilbert scheme $\Hilb 2 2 \QQ$:

Hilbert schemes were constructed by Grothendieck \cite{MR1611822}.
For the Hilbert scheme $\Hilb 2 2 \QQ$ of subschemes of length $2$ on
$\PP^2_\QQ$, see \cite{MR0237496,MR0335512} and also
\cite{MR3010070}. The \emph{Hilbert--Chow morphism}
\begin{equation*}
  h: \Hilb 2 2 \QQ \to \Sym 2 2 \QQ
\end{equation*}
is the blow-up of the singular locus of $\Sym 2 2 \QQ$ described in
Remark~(\ref{rem:singular_locus}); hence we can identify it with
$h : \tX \to X$ as in Theorem~\ref{thm:manin-peyre}.

\begin{lemma}\label{lem:almost_Fano}
  The desingularization $\tX \cong \Hilb 2 2 \QQ$ of
  $X \cong \Sym 2 2 \QQ$ is a split rational almost Fano variety
  \cite[D\'efinition~3.1]{MR2019019} of Picard number $\rho_\tX = 2$
  whose rational points are not thin.
\end{lemma}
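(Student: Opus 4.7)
The plan is to use the identification $\tX \cong \Hilbb^2(\PP^2_\QQ)$ provided by the Hilbert--Chow morphism $h : \tX \to X$ and to verify each clause of Peyre's definition of almost Fano \cite[D\'efinition~3.1]{MR2019019}, together with the additional claims (splitness, $\rho_\tX = 2$, and non-thinness of $\tX(\QQ)$).

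First, smoothness, projectivity, and geometric integrality of $\tX$ (of dimension $4$) follow from Fogarty's theorem that $\Hilbb^n(S)$ is smooth and irreducible for any smooth projective surface $S$, together with standard projectivity of Hilbert schemes. For the Picard group, I would invoke the classical description of Fogarty (see also \cite{MR0335512}): $\Pic(\tX_{\Qbar}) = \ZZ \cdot S_1 \oplus \ZZ \cdot S_2$, where $S_1 = H^{[2]}$ is the tautological divisor class attached to the hyperplane class $H$ of $\PP^2_\QQ$ (the class of length-$2$ subschemes meeting a fixed line) and $S_2 = E/2$ is half the exceptional divisor of $h$. Both generators are defined over $\QQ$, so the $\Gal(\Qbar/\QQ)$-action on $\Pic(\tX_{\Qbar})$ is trivial; this shows that $\tX$ is split, that $\Pic(\tX_{\Qbar})$ is torsion-free, and that $\rho_\tX = 2$.

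Second, I would establish $\QQ$-rationality of $\tX$ via the natural morphism $\pi : \tX \to (\PP^2_\QQ)^\vee$ sending a length-$2$ subscheme of $\PP^2_\QQ$ to the unique line spanning it: each fiber is $\Hilbb^2(\PP^1_\QQ) = \Symm^2(\PP^1_\QQ) \cong \PP^2_\QQ$, and $\pi$ is Zariski-locally trivial, so $\tX$ is $\QQ$-birational to $(\PP^2_\QQ)^\vee \times \PP^2_\QQ$, hence $\QQ$-rational. Rationality then gives the birationally invariant vanishings $H^1(\tX, \OO_\tX) = H^2(\tX, \OO_\tX) = 0$. For bigness of $-K_\tX$, Fogarty's formula $K_{\Hilbb^n S} = (K_S)^{[n]}$ applied to $S = \PP^2_\QQ$ yields $-K_\tX = 3 S_1$; since $S_1$ is the pull-back along $h$ of a suitable positive multiple of $\OO_{\PP^5}(1)|_X$, it is nef and big, and hence so is $-K_\tX$. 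This completes the verification of Peyre's definition.

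Third, non-thinness of $\tX(\QQ)$ follows from $\QQ$-rationality: there is a dense Zariski open $U \subset \tX$ that is $\QQ$-isomorphic to a dense open of $\PP^4_\QQ$, and Hilbert's irreducibility theorem implies $\PP^4(\QQ)$ is not thin, so neither is $U(\QQ) \subset \tX(\QQ)$. The main obstacle I anticipate is pinning down the precise normalization needed to express $S_1$ as a pull-back from $X$ and to derive $-K_\tX = 3 S_1$ from Fogarty's formula (including the factor of $2$ in $E = 2 S_2$); the remaining ingredients are classical facts about Hilbert schemes of points on smooth surfaces.
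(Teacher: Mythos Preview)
Your argument is correct and structurally parallel to the paper's, but you differ in two places worth noting. For rationality, the paper simply observes that the cubic equation \eqref{eq:cubic} can be solved for $X_0$, so $X$ (and hence $\tX$, being birational to $X$) is $\QQ$-rational; your route via the $\PP^2$-bundle $\pi:\tX\to(\PP^2_\QQ)^\vee$ is a pleasant and more intrinsic alternative, though it requires knowing that this bundle is Zariski-locally trivial. For big and nef, the paper cites \cite[Theorem~2.5]{MR3010070} directly, whereas you derive $-K_\tX = 3S_1$ from Fogarty's formula and the crepancy of $h$; the paper postpones the crepancy statement to the next lemma (Lemma~\ref{lem:anticanonical_height}). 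Your proof is more self-contained in checking all clauses of Peyre's definition (smoothness, $H^i(\OO_\tX)=0$, torsion-freeness), which the paper leaves implicit. One small point: to conclude that $S_2=E/2$ is \emph{defined over} $\QQ$ (not merely Galois-invariant) you are implicitly using that $\tX$ has a $\QQ$-point, so that $\Pic(\tX)\to\Pic(\tX_\Qbar)^{\Gal}$ is surjective; the paper's version of this step uses $E$ and $h^*\OO_X(1)$ as Galois-fixed classes and makes the same implicit appeal.
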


\begin{proof}
  Since we can resolve the defining equation (\ref{eq:cubic}) for the
  coordinate $X_0$, for example, our cubic fourfold $X$ is rational, and hence its
  rational points are not a thin set. Since $h$ is birational, the
  same holds for $\tX$. Furthermore, $\omega_\tX^{-1}$ is big and nef
  (see the proof of \cite[Theorem~2.5]{MR3010070}), and the geometric
  Picard group of $\tX$ is free of rank $\rho_\tX = 2$ \cite[\S
  1]{MR0335512}. Since $\Gal(\Qbar/\QQ)$ fixes both the exceptional
  divisor and $h^*\OO_X(1)$, we observe that $\tX$ is split (i.e., the
  natural map $\Pic \tX \to \Pic(\tX_\Qbar)$ is an isomorphism).
\end{proof}

\begin{lemma}\label{lem:anticanonical_height}
  The functions $H$ as in \eqref{eq:height} and $\tH = H \circ h$
  define anticanonical heights on $X\cong \Sym 2 2 \QQ$ and
  $\tX\cong \Hilb 2 2 \QQ$, respectively.
\end{lemma}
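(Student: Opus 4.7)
The plan is to establish $\omega_{\tX}^{-1} \cong h^*\OO_X(3)$ in $\Pic(\tX)$ (at least up to torsion), which gives at once that $\tH = H \circ h$ is an anticanonical height; the statement for $H$ on $X$ itself then follows from adjunction on the smooth locus of the cubic $X \subset \PP^5_\QQ$.

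For $H$ on $X$: on the smooth locus $X^{\mathrm{sm}} = X \setminus \mathrm{Sing}(X)$, adjunction for a degree-three hypersurface in $\PP^5_\QQ$ gives $\omega_{X^{\mathrm{sm}}} \cong \OO_X(-6+3)|_{X^{\mathrm{sm}}} = \OO_X(-3)|_{X^{\mathrm{sm}}}$. Since $H$ in \eqref{eq:height} is by construction the height associated to $\OO_X(3)$, this is the anticanonical height on $X$ in the Gorenstein sense used in Manin's conjecture for singular varieties.

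For $\tH$ on $\tX \cong \Hilb 2 2 \QQ$, I would use the standard symmetric-group resolution. Let $Y = \PP^2_\QQ \times_\QQ \PP^2_\QQ$, let $\Delta \subset Y$ be the diagonal, and let $\pi \colon \widetilde Y \to Y$ be the blow-up of $\Delta$ with exceptional divisor $\widetilde E$. The $S_2$-action on $Y$ lifts to $\widetilde Y$, and the quotient $q\colon \widetilde Y \to \widetilde Y / S_2 \cong \tX$ fits into a commutative diagram with $Y \to X$ (the bihomogeneous map of Proposition~\ref{prop:X_isom_Sym_2_2}) and the Hilbert--Chow morphism $h$. The key local observation is that in coordinates $(y_i+z_i,\, y_i-z_i)$ near the diagonal, $S_2$ fixes the sums and negates the differences; hence it acts by $-1$ on $N_{\Delta/Y}$, and therefore trivially on $\widetilde E = \PP(N_{\Delta/Y})$. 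Consequently $q$ is étale away from $\widetilde E$ and ramified there with index $2$, so combining with the blow-up formula $K_{\widetilde Y} = \pi^* K_Y + \widetilde E$ yields
\begin{equation*}
  q^* K_{\tX} \;=\; K_{\widetilde Y} - \widetilde E \;=\; \pi^* K_Y \;=\; \pi^*\OO_Y(-3,-3).
\end{equation*}
On the other hand, the map $Y \to \PP^5_\QQ$ in Proposition~\ref{prop:X_isom_Sym_2_2} is bihomogeneous of bidegree $(1,1)$, so $\OO_{\PP^5}(1)|_X$ pulls back to $\OO_Y(1,1)$; hence $q^*(h^*\OO_X(3)) = \pi^*\OO_Y(3,3) = -q^* K_{\tX}$. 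Since $q$ is finite and surjective, $q^*$ is injective on $\Pic(\tX) \otimes \QQ$, so $h^*\OO_X(3) \cong \omega_{\tX}^{-1}$ up to torsion, which is inessential for heights. Thus $\tH$ represents an anticanonical height on $\tX$.

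The main obstacle I anticipate is the exact cancellation between the blow-up contribution $+\widetilde E$ in $K_{\widetilde Y}$ and the ramification contribution $-\widetilde E$ in $q^* K_{\tX}$: this depends on the $S_2$-action on $N_{\Delta/Y}$ being multiplication by $-1$, so that it becomes trivial on the projectivized normal bundle $\widetilde E$. Once this is confirmed, everything else reduces to bookkeeping with the blow-up formula, the ramification formula, and the bidegree of the map $Y \to X \subset \PP^5_\QQ$.
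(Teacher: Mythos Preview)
Your argument is correct and takes a genuinely different route from the paper. The paper's proof is essentially a two-line citation: adjunction gives $\omega_X \cong \OO_X(-3)$ for a cubic in $\PP^5$, and then it invokes Fogarty's result that the Hilbert--Chow morphism $h$ is a crepant resolution, so $h^*\omega_X^{-1} = \omega_{\tX}^{-1}$ (with the adelic-norm refinement deferred to a later lemma). You instead \emph{prove} crepancy directly via the $S_2$-equivariant blow-up $\widetilde Y \to Y = \PP^2 \times \PP^2$: the exceptional divisor $\widetilde E$ appears once in $K_{\widetilde Y}$ from the codimension-$2$ blow-up and once in the ramification of $q\colon \widetilde Y \to \tX$ because $S_2$ fixes $\widetilde E$ pointwise, and these cancel to give $q^*K_{\tX} = \pi^*K_Y$. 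Your route is more self-contained and explains the mechanism behind crepancy; the paper's route is shorter and leans on the literature. Two small remarks: the torsion caveat is unnecessary here since $\Pic(\tX)\cong\ZZ^2$ is free, and the identification $\widetilde Y/S_2 \cong \Hilb 2 2 \QQ$ you use is itself a standard fact (essentially the same Fogarty reference the paper cites), so you have not entirely avoided the literature---but the canonical-class computation is genuinely yours.
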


\begin{proof}
  Indeed, $\Pic(\Sym 2 2 \QQ) \cong \ZZ$; let $\OO_{\Sym 2 2 \QQ}(1)$
  be its ample generator. This corresponds to $\OO_X(1)$ via our
  isomorphism $\Sym 2 2 \QQ \cong X \subset \PP^5_\QQ$. The
  anticanonical class of $X \subset \PP^5_\QQ$ is $\OO_X(a)$ where
  $a=(n+1)-d=3$ for this hypersurface of degree $d=3$ in
  $n=5$-dimensional projective space. Hence the third power of the
  exponential Weil height on $\PP^5_\QQ$ restricts to an anticanonical
  height on $X$.

  By \cite{MR0237496} (see \cite[Theorem~2.2]{MR3010070}), $h$ is a
  crepant resolution (i.e., $h^*\omega_X^{-1} = \omega_\tX^{-1}$).  We
  will see in Lemma~\ref{lem:height_via_adelic_norm} that $\tH$ is
  induced by an adelic norm on $\omega_\tX^{-1}$.
\end{proof}

Therefore, the version \cite[Conjecture~5.2]{MR4472281} of Manin's
conjecture should apply to $\tX$ with the anticanonical height $\tH$.

We observe that $\tX_0:=h^{-1}(X_0) \subset \tX(\QQ)$ is a thin set
since $S_0 \subset \Sym 2 2 \QQ(\QQ)$ is thin (as discussed in
\cite{MR4057715} and \cite[\S 8.4.2]{MR3741845}) and corresponds to
$X_0 \subset X(\QQ)$.  By \cite[Example~8.6]{MR3712166},
\cite[Example~5.14]{MR4472281}, $\tX_0$ is the expected accumulating
set that should be removed from the counting function according to
\cite[Conjecture~5.2]{MR4472281}.

Hence we must count points on
\begin{equation*}
  \tX(\QQ) \setminus \tX_0 = \tX_+ \cup \tX_-,
\end{equation*}
where $\tX_+ = h^{-1}(X_+)$ is the subset of $\Hilb 2 2 \QQ(\QQ)$
corresponding to subschemes of $\PP^2_\QQ$ of length $2$ consisting of
unordered pairs of conjugate points defined over real quadratic
extensions of $\QQ$, and $\tX_- = h^{-1}(X_-)$ corresponds to those
over imaginary quadratic extensions.

\begin{prop}\label{prop:asymptotic_tX}
  We have
  \begin{align*}
    N(\tX(\QQ) \setminus \tX_0,\tH,B)
    &:=\#\{\xx \in \tX(\QQ) \setminus \tX_0 : \tH(\xx)\le B\}\\
    &= (c_++c_-) B \log B + O(B(\log B)^{1/2}).
  \end{align*}
\end{prop}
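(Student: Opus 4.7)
The plan is to transport the count on the desingularization $\tX$ back to the singular model $X$ via the Hilbert--Chow morphism $h : \tX \to X$, and then invoke the asymptotic formula of Proposition~\ref{prop:asymptotic_formula}. The key geometric input is that $h$ is an isomorphism away from the singular locus of $X$ (contracted to the Veronese surface by the exceptional divisor), so on the preimage of the smooth locus it is bijective, and by definition $\tH = H \circ h$ preserves the height.

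First I would verify that $X_+ \cup X_-$ lies in the smooth locus of $X$. By Remark~\ref{rem:singular_locus}, the rational singular points of $X$ are precisely those $\xx$ with $\Delta_{01}(\xx) = \Delta_{02}(\xx) = \Delta_{12}(\xx) = 0$, i.e., the set $X(\QQ)_\Delta$. Inspecting the definitions in (\ref{eq:def_X_0+-}), every $\xx \in X_+$ has some $\Delta_{ij}(\xx)$ that is a nonsquare (hence nonzero), and every $\xx \in X_-$ has some $\Delta_{ij}(\xx) < 0$ (hence nonzero); thus neither $X_+$ nor $X_-$ meets $X(\QQ)_\Delta$. Consequently $h$ induces height-preserving bijections $\tX_+ := h^{-1}(X_+) \to X_+$ and $\tX_- := h^{-1}(X_-) \to X_-$, so
\begin{equation*}
  N(\tX_+, \tH, B) = N(X_+, H, B), \qquad N(\tX_-, \tH, B) = N(X_-, H, B).
\end{equation*}

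Since $X(\QQ) = X_0 \sqcup X_+ \sqcup X_-$ is a disjoint union and $\tX_0 = h^{-1}(X_0)$, we have $\tX(\QQ) \setminus \tX_0 = \tX_+ \sqcup \tX_-$. Adding the two asymptotic formulas from Proposition~\ref{prop:asymptotic_formula} yields
\begin{equation*}
  N(\tX(\QQ) \setminus \tX_0, \tH, B) = N(X_+, H, B) + N(X_-, H, B) = (c_+ + c_-) B \log B + O(B(\log B)^{1/2}),
\end{equation*}
as required. The argument is essentially formal; the only point needing care is the verification that $X_+$ and $X_-$ avoid the singular locus of $X$, which is already built into their definitions. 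I do not foresee a serious obstacle.
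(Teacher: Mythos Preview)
Your proposal is correct and follows essentially the same approach as the paper: use that $h$ is an isomorphism outside the singular locus, observe that $X_+\cup X_-$ avoids $X(\QQ)_\Delta$ (the paper cites Lemma~\ref{lem:X(k)_components}\eqref{it:X(k)_disjoint_union} and Remark~\ref{rem:compare_notation_Q} for this, while you check it directly from the definitions), and then apply Proposition~\ref{prop:asymptotic_formula}.
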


\begin{proof}
  Since $h$ is an isomorphism outside the singular locus of $X$
  (whose rational points $X(\QQ)_\Delta$ are in $X_0$ by
  Lemma~\ref{lem:X(k)_components}\eqref{it:X(k)_disjoint_union} and
  Remark~\ref{rem:compare_notation_Q}), this number is equal to
  $N(X_+\cup X_-,H,B)$. Hence the result follows directly from
  Proposition~\ref{prop:asymptotic_formula}
\end{proof}

\begin{remark}\label{rem:exceptional_points_tX}
  The number of rational points of height $\le B$ in the singular
  locus of $X$ is $O(B^{1/2})$ (since (\ref{eq:diagonal_map}) sends a
  point of exponential Weil height $B$ in $\PP^2(\QQ)$ to a point of
  height $\asymp B^6$ on $X$). Therefore, the number of rational
  points of bounded height in $\tX_0$ outside the exceptional divisor
  is
  \begin{equation*}
    N(\tX_0 \setminus E(\QQ),\tH,B) = N(X_0,\tH,B)+O(B^{1/2}) = c_0 B \log B + O(B).
  \end{equation*}
  Hence the thin set $\tX_0 \setminus E(\QQ)$ has the same growth rate
  $\asymp B \log B$ for rational points of bounded height as
  $\tX_+ \cup \tX_-$. Note that the exceptional divisor $E$ contains
  infinitely many rational points of bounded height.
\end{remark}

We claim that the asymptotic formula in
Proposition~\ref{prop:asymptotic_tX} agrees with
\cite[Conjecture~5.2]{MR4472281}. First of all, the exponents of $B$
and $\log B$ are as expected: Since $\tX$ is an anticanonical height,
the expected exponent of $B$ is $a(\tX,\omega_\tX^{-1}) = 1$. Since
$\tX$ is split (see Lemma~\ref{lem:almost_Fano}), its Picard group
over $\QQ$ coincides with its geometric Picard group, hence the
expected exponent of $\log B$ is
$b(\QQ,\tX,\omega_\tX^{-1}) - 1 = \rho_\tX -1=1$.  Since $\tX$ is an
almost Fano variety (see
Lemma~\ref{lem:almost_Fano}), we should compare the leading constant
$c_++c_-$ to Peyre's constant for $\tX$ as in \cite[Formule empirique
5.1]{MR2019019}, namely
\begin{equation}\label{eq:peyres_constant}
  \theta_\tH(\tX) = \alpha(\tX)\beta(\tX)\tau_\tH(\tX).
\end{equation}

Here, by \cite[Proposition~3.2]{MR4057715},
\begin{equation}\label{eq:alpha}
  \alpha(\tX) = \frac 1 9
\end{equation}
(which does not depend on the chosen adelic norm). Since
$\tX \cong \Hilb 2 2 \QQ$ is split (Lemma~\ref{lem:almost_Fano}),
$\Gal(\Qbar/\QQ)$ acts trivially on the geometric Picard group
$\Pic(\tX_\Qbar)$, hence
\begin{equation}\label{eq:beta}
  \beta(\tX) = \#H^1(\Gal(\Qbar/\QQ), \Pic(\tX_\Qbar)) = 1.
\end{equation}

To compute $\tau_\tH(\tX)$, we must construct an adelic norm
$(\|\cdot\|_v)_{v \in \Omega_\QQ}$ on the anticanonical bundle
$\omega_\tX^{-1}$ that induces our height function $\tH$.

We argue as in \cite[\S 13]{MR3552013}. Let
$\mathcal{C}_{X/\PP^5_\QQ}$ be the conormal sheaf of our singular
cubic fourfold $X \subset \PP^5$, and $\Omega_{\PP^5_\QQ}^5$ the top
exterior power of the sheaf of differentials on $\PP^5_\QQ$. Then the
canonical sheaf of $X$ is
\begin{equation*}
  \omega_X = \det(\mathcal{C}_{X/\PP^5_\QQ})^\vee \otimes_{\OO_X} (\Omega_{\PP^5_\QQ}^5)_{|X}.
\end{equation*}
Next, we construct an explicit isomorphism $\omega_X \cong \OO_X(-3)$.

Since $X$ is a cubic hypersurface in $\PP^5_\QQ$, multiplication by
the defining polynomial $P$ induces an isomorphism
$\OO_X(-3) \to \mathcal{C}_{X/\PP^5_\QQ}$. Hence we obtain an
isomorphism $\OO_X(3) \to \det(\mathcal{C}_{X/\PP^5_\QQ})^\vee$ that
is defined on the affine subset $\AAA^5_\QQ$ where $X_\lambda \ne 0$
(for any $\lambda$ in the index set
$\Lambda = \{0,1,2,(01),(02),(12)\}$) by
$X_\lambda^3 \mapsto (P^{(\lambda)})^\vee$ (where
$P^{(\lambda)}=P/X_\lambda^3$ is the affine polynomial in
$X_\mu^{(\lambda)} = X_\mu/X_\lambda$ defining $X$ on the affine chart
where $X_\lambda \ne 0$). The map
\begin{equation}\label{eq:omega_S=O(-3)}
  X_\lambda^{-6} \mapsto \pm \dd X_0^{(\lambda)} \wedge \dots \wedge \widehat{\dd X_\lambda^{(\lambda)}} \wedge \dots \wedge \dd X_{12}^{(\lambda)}
\end{equation}
(with a suitable choice of signs that will not matter for our
purposes) defines an isomorphism
$\OO_X(-6) \to (\Omega_{\PP^5_\QQ}^5)_{|X}$. Together, we have an
isomorphism $\OO_X(-3) \to \omega_S$ defined on $X^{(\lambda)}$ by
\begin{equation*}
  X_\lambda^{-3} \mapsto s_\lambda = \pm (P^{(\lambda)})^\vee \otimes \dd X_0^{(\lambda)} \wedge \dots \wedge \widehat{\dd X_\lambda^{(\lambda)}} \wedge \dots \wedge \dd X_{12}^{(\lambda)}.
\end{equation*}
On $X_\reg$ (the complement of the singular locus of $X$), the conormal
sequence
\begin{equation*}
  0 \to \mathcal{C}_{X_\reg/\PP^5_\QQ} \to (\Omega_{\PP^5_\QQ}^1)_{|X} \to  \Omega_{X_\reg}^1 \to 0
\end{equation*}
allows us to identify $\omega_{X_\reg}$ with $\Omega^2_{X_\reg}$, with
\begin{equation}\label{eq:s_lambda}
  s_\lambda = \pm (\partial P^{(\lambda)}/\partial X_\mu^{(\lambda)})^{-1} \dd X_0^{(\lambda)} \wedge \dots \wedge \widehat{\dd X_\lambda^{(\lambda)}} \wedge \dots \wedge \widehat{\dd X_\mu^{(\lambda)}} \wedge \dots \wedge \dd X_{12}^{(\lambda)}
\end{equation}
(for any index $\mu \ne \lambda$ in $\Lambda$) in
$\omega_{X,\xi} = \omega_{X_\reg,\xi} = \Omega^2_{\QQ(X)}$ for the
generic point $\xi$ on $X$. Dual to the rational sections $s_\lambda$
of $\omega_X$, we obtain global sections $\tau_\lambda$ of
$\omega_X^{-1}$ corresponding to the sections $X_\lambda^3$ of
$\OO_X(3)$.

The pullback of differential forms along the crepant resolution
$h : \tX \to X$ allows us to identify the canonical bundle
$\omega_\tX$ with $h^*\omega_X$, and analogously for the anticanonical
bundles. In particular, $h^*\tau_\lambda$ are global sections of
$\omega_\tX^{-1}$, and they form a basepoint free linear system since
$(h^*X_0,\dots,h^*X_{12})$ define the morphism
$\iota\circ h: \tX \to \PP^5_\QQ$ (where $\iota$ is the
embedding of $X \hookrightarrow \PP^5_\QQ$ defined by \eqref{eq:cubic}). As in
\cite[Exemples 7.7.5 b)]{peyre-book}, we use them to define the
$v$-adic norm
\begin{equation*}
  \|\tau\|_v := \min_{\substack{\mu \in \Lambda\\h^*\tau_\mu(x) \ne 0}} \left\{\left|\frac{\tau}{h^*\tau_\mu(x)}\right|_v\right\}
\end{equation*}
for $\tau \in \omega_\tX^{-1}(x)$ over a local point
$x \in \tX(\QQ_v)$, and by \cite[Exemples 7.7.16~b)]{peyre-book},
$(\|\cdot\|)_{v \in \Omega_\QQ}$ is an adelic norm on the
anticanonical bundle of $\tX$.

\begin{lemma}\label{lem:height_via_adelic_norm}
  The height induced (as in \cite[D\'efinition~2.3]{MR2019019}) by
  this adelic norm coincides with our height function
  $\tH = H \circ h$.
\end{lemma}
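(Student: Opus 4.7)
The plan is to unwind the definition of the height attached to an adelic norm (as in \cite[D\'efinition~2.3]{MR2019019}) and match it directly against the formula $\tH(\xx) = \max_\mu |x_\mu|_\infty^3$ on integer coprime representatives $x_0,\dots,x_{12}$ of $h(\xx) \in \PP^5(\QQ)$. First I would trace through the identifications of line bundles set up just before the lemma. Under the isomorphism $\OO_X(-3) \to \omega_X$ sending $X_\lambda^{-3}$ to $s_\lambda$, the dual isomorphism $\omega_X^{-1} \to \OO_X(3)$ sends $\tau_\lambda$ to $X_\lambda^3$, and pulling back along the crepant resolution (so that $\omega_\tX^{-1} \cong h^*\omega_X^{-1}$) identifies $h^*\tau_\lambda$ with $h^*X_\lambda^3 \in \Gamma(\tX, h^*\OO_X(3))$. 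In particular, for any two indices $\lambda,\mu \in \Lambda$, the ratio $h^*\tau_\lambda/h^*\tau_\mu$ is the pullback of the rational function $X_\lambda^3/X_\mu^3$ on $X$.

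Next I would compute the $v$-adic norm. For $\xx \in \tX(\QQ_v)$ with $h(\xx) = (x_0:\cdots:x_{12})$ and any $\lambda \in \Lambda$ with $x_\lambda \ne 0$, the definition gives
\[
\|h^*\tau_\lambda(\xx)\|_v = \min_{\substack{\mu \in \Lambda\\ x_\mu \ne 0}} \left|\frac{x_\lambda^3}{x_\mu^3}\right|_v = \frac{|x_\lambda|_v^3}{\max_{\mu \in \Lambda} |x_\mu|_v^3},
\]
since $|x_\lambda/x_\mu|_v$ is smallest when $|x_\mu|_v$ is largest. Taking the reciprocal and multiplying over all places, the height associated to the adelic norm is
\[
H_{(\|\cdot\|_v)}(\xx) = \prod_{v \in \Omega_\QQ} \|h^*\tau_\lambda(\xx)\|_v^{-1} = \prod_v \frac{\max_{\mu} |x_\mu|_v^3}{|x_\lambda|_v^3}.
\]

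The final step is to invoke the product formula on the rational point $h(\xx) \in \PP^5(\QQ)$. Choosing coprime integer coordinates $x_0,\dots,x_{12}$, we have $\max_\mu |x_\mu|_p = 1$ at every finite prime $p$ (since some $x_\mu$ is a $p$-adic unit), and $\prod_v |x_\lambda|_v = 1$ as $x_\lambda \in \QQ^\times$. The product collapses to $\max_\mu |x_\mu|_\infty^3 = H(h(\xx)) = \tH(\xx)$ by \eqref{eq:height}, independently of the choice of $\lambda$.

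The main obstacle is purely one of bookkeeping in the first step: verifying that the signs and transition factors introduced in \eqref{eq:omega_S=O(-3)} and \eqref{eq:s_lambda} assemble, after dualisation, into the clean correspondence $\tau_\lambda \leftrightarrow X_\lambda^3$ that the paper asserts. Once this identification of global sections is in place, the height computation is a formal consequence of the definitions together with the product formula, and no genuinely analytic input is required.
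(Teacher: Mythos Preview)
Your argument is correct and is essentially identical to the paper's own direct verification: both compute $\|h^*\tau_\lambda(\xx)\|_v^{-1} = \max_\mu |x_\mu|_v^3 / |x_\lambda|_v^3$ via the identification $\tau_\mu/\tau_\lambda = (X_\mu/X_\lambda)^3$, take the product over all places, and collapse it to $\max_\mu |x_\mu|_\infty^3$ using the product formula on coprime integer coordinates. The paper additionally cites a general reference (Peyre's book, Exemples~7.7.16~b)) before doing the same check you carry out.
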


\begin{proof}
  This holds by \cite[Exemples 7.7.16 b)]{peyre-book}, but can also be
  checked directly: For a rational point $x \in \tX(\QQ)$, its image
  $h(x) = (x_0:\dots:x_{12}) \in X(\QQ)$ is represented by coprime
  integral coordinates. At least one of them, say $x_\lambda$, does not
  vanish, i.e., $h^*\tau_\lambda(x) \ne 0$. Then the corresponding
  height is
  \begin{align*}
    &\prod_{v \in \Omega_\QQ} \|h^*\tau_\lambda(x)\|_v^{-1} = \prod_{v \in \Omega_\QQ} \max_{\mu \in \Lambda}\{|X^{(\lambda)}_\mu(h(x))|_v\}^3= \prod_{v \in \Omega_\QQ} \frac{\max\{|x_0|_v,\dots,|x_{12}|_v\}^3}{|x_\lambda|_v^3} \\
    &= \max\{|x_0|_v,\dots,|x_{12}|_v\}^3 = H(h(x)) = \tH(x)
  \end{align*}
  by
  $\tau_\mu/\tau_\lambda = (X_\mu/X_\lambda)^3 = (X^{(\lambda)}_\mu)^3$
  from \eqref{eq:omega_S=O(-3)}, and by the product formula.
\end{proof}

Our adelic norm also defines a Tamagawa measure, which we now make
explicit. The Hilbert--Chow morphism $h$ is an isomorphism between the
complement $\tX \setminus E$ of the exceptional divisor $E$ in $\tX$
and $X_\reg$ (the complement of the diagonal, which is the radical of
the vanishing locus of $\Delta_{01},\Delta_{02},\Delta_{12}$ by
Remark~\ref{rem:singular_locus}). On the affine subset $X^{(0)}$ of
$X$ where $X_0 \ne 0$, we treat $X_1^{(0)}$ as the dependent
coordinate, and then the four remaining $X_\nu^{(0)}$ (for
$\nu \in \Lambda \setminus \{0,1\}$) form a system of local
coordinates on $U_0\subset X^{(0)} \subset X$ as in \eqref{eq:U_0},
defining a homeomorphism
\begin{equation*}
  \psi: U_0(\QQ_v) \to W_0 := \{\xx = (x_2,x_{01},x_{02},x_{12}) \in \QQ_v^{\oplus 4} \mid x_{01}^2-4x_1 \ne 0,\ x_{02}^2-4x_2 \ne 0\}.
\end{equation*}
Therefore, with $V_0:=h^{-1}(U_0)$, the four $Y_\nu:=h^*X_\nu^{(0)}$
define local coordinates on $V_0(\QQ_v) \subset \tX(\QQ_v)$ via the
map $\phi= \psi \circ h : V_0 \to W_0$.

With the notation $\xx = (x_2,x_{01},x_{02},x_{12}) \in \QQ_v^{\oplus 4}$, we have
\begin{equation}\label{eq:local_integral}
  \begin{aligned}
    &\omega_{\tH,v}(V_0(\QQ_v))
      = \int_{W_0} \left\|\left(\frac{\partial}{\partial Y_2}\wedge \frac{\partial}{\partial Y_{01}}\wedge \frac{\partial}{\partial Y_{02}}\wedge \frac{\partial}{\partial Y_{12}}\right)(\phi^{-1}(\xx))\right\|_v \dd \xx\\
    &= \int_{W_0} \min_{\substack{\lambda \in \Lambda\\\tau_\lambda(\psi^{-1}(\xx)) \ne 0}} \left\{\left|\frac{\tau_0(\psi^{-1}(\xx))}{(\partial P^{(0)}/\partial X_1^{(0)}\cdot \tau_\lambda)(\psi^{-1}(\xx))}\right|_v\right\} \dd \xx\\
    &= \int_{W_0} \frac{\dd \xx}{|x_{02}^2-4x_2|_v \cdot  \max\{1,|\frac{x_{01}x_{02}x_{12}-x_{12}^2-x_2x_{01}^2}{x_{02}^2-4x_2}|_v,|x_2|_v,|x_{01}|_v,|x_{02}|_v,|x_{12}|_v\}^3}.
  \end{aligned}
\end{equation}
Here, we have applied $\tau_\lambda / \tau_0 = (X_\lambda/X_0)^3 = (X_\lambda^{(0)})^3$,
and
\begin{equation*}
  \frac{\partial}{\partial X_2^{(0)}}\wedge \frac{\partial}{\partial
    X_{01}^{(0)}}\wedge \frac{\partial}{\partial X_{02}^{(0)}}\wedge
  \frac{\partial}{\partial X_{12}^{(0)}} = \pm \left(\frac{\partial P^{(0)}}{\partial
  X_1^{(0)}}\right)^{-1} \tau_0 = \pm ((X_{02}^{(0)})^2-4X_2^{(0)})^{-1} \tau_0
\end{equation*}
(using \eqref{eq:s_lambda} for $(\lambda,\mu)=(0,1)$).

The Tamagawa number as \cite[D\'efinition~4.8]{MR2019019} appearing in
\eqref{eq:peyres_constant} is
\begin{equation}\label{def:tamagawa}
  \tau_\tH(\tX) = \omega_\tH(\tX(\AAA_\QQ))
\end{equation}
since $\tX$ is rational and hence satisfies weak approximation. Here,
$\omega_\tH$ is the Tamagawa measure
\begin{equation*}
  \omega_\tH = \lim_{s \to 1}(s-1)^{\rho_\tX} L_S(s,\Pic(\tX_\Qbar)) \prod_{v \in \Omega_\QQ} \lambda_v^{-1} \omega_{\tH,v}
\end{equation*}
as in \cite[D\'efinition~4.6]{MR2019019}, where $S\subset \Omega_\QQ$ is a suitable
finite set of finite places of $\QQ$. Since $\tX$ is split of Picard
number $\rho_\tX=2$, we have
\begin{equation*}
  L_p(s,\Pic(\tX_\Qbar)) = \frac{1}{(1-\frac 1{p^s})^2}
\end{equation*}
and hence
\begin{equation}\label{eq:convergence_factors}
  \lambda_p^{-1} = L_p(1,\Pic(\tX_\Qbar))^{-1} = (1-\tfrac 1 p)^2
\end{equation}
for $p \in \Omega_\QQ \setminus (\{\infty\} \cup S)$, while
$\lambda_\infty^{-1} = \lambda_p^{-1} = 1$ for $p \in S$. Hence
\begin{equation*}
  L_S(s,\Pic(\tX_\Qbar)) = \prod_{p \in \Omega_\QQ \setminus (\{\infty\} \cup S)} L_p(s,\Pic(\tX_\Qbar)) = \prod_{p \in \Omega_\QQ \setminus (\{\infty\} \cup S)} \frac{1}{(1-\frac 1{p^s})^2}.
\end{equation*}
We observe that $\omega_\tH$ is unchanged if we replace $S$ by
$\emptyset$ since the extra factors in $L_S(s,\Pic(\tX_\Qbar))$ and
$\lambda_p^{-1}$ cancel out. Then the $L$-function is $\zeta(s)^2$,
so that
\begin{equation}\label{eq:limit}
  \lim_{s \to 1}(s-1)^2 L_S(s,\Pic(\tX_\Qbar))=1.
\end{equation}
It remains to compute the local densities $\omega_{\tH,v}(\tX(\QQ_v))$.

\begin{lemma}\label{lem:omega_v_parts}
  For any place $v \in \Omega_\QQ$ and $R_{\QQ_v}$ as in
  Definition~\ref{def:R_k}, we have
  \begin{equation*}
    \omega_{\tH,v}(\tX(\QQ_v)) = \sum_{\epsilon \in R_{\QQ_v}} \omega_v^{(\epsilon)},
  \end{equation*}
  where
  \begin{equation*}
    \omega_v^{(1)} = \frac 1 2 \int_{\QQ_v^{\oplus 4}} \frac{\dd y_1 \dd y_2 \dd z_1
      \dd z_2}{F_1(y_1,y_2,z_1,z_2)^3},\qquad 
    \omega_v^{(\epsilon)} = \frac{|4\epsilon|_v}{2} \int_{\QQ_v^{\oplus 4}} \frac{\dd a_1 \dd b_1 \dd a_2 \dd b_2}{F_\epsilon(a_1,b_1,a_2,b_2)^3}
  \end{equation*}
  for $\epsilon \ne 1$, with 
  \begin{align*}
    F_1(y_1,y_2,z_1,z_2) &= \max\{1,|y_1z_1|_v, |y_2z_2|_v, |z_1+y_1|_v, |z_2+y_2|_v, |y_1z_2+y_2z_1|_v\},\\
    F_\epsilon(a_1,b_1,a_2,b_2) &= \max\{1,|a_1^2-\epsilon b_1^2|_v,|a_2^2-\epsilon b_2^2|_v,|2a_1|_v,|2a_2|_v,|2(a_1a_2-\epsilon b_1b_2)|_v\}.
  \end{align*}
\end{lemma}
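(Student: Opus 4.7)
The plan is to start from the explicit local integral in \eqref{eq:local_integral}, partition the domain of integration according to Lemma~\ref{lem:X(k)_components}\eqref{it:X(k)_disjoint_union}, and then apply the parameterizations of Lemmas~\ref{lem:C1-parameterization-1} and \ref{lem:C1-parameterization-epsilon} as changes of variables.

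First I would reduce the integral to $V_0(\QQ_v)$: since $h$ is an isomorphism off the exceptional divisor (which has positive codimension in $\tX$) and $X\setminus U_0$ is cut out by the vanishing of $X_0$, $\Delta_{01}$ or $\Delta_{02}$, the complement of $V_0(\QQ_v)$ in $\tX(\QQ_v)$ is $v$-adic null, so $\omega_{\tH,v}(\tX(\QQ_v)) = \omega_{\tH,v}(V_0(\QQ_v))$. On $V_0$ we have $\Delta_{01}\Delta_{02} \ne 0$, so Lemma~\ref{lem:X(k)_components}\eqref{it:X(k)_disjoint_union} gives a disjoint open decomposition $V_0(\QQ_v) = \bigsqcup_{\epsilon \in R_{\QQ_v}} V_0(\QQ_v)^{(\epsilon)}$, reducing the task to computing each $\omega_{\tH,v}(V_0(\QQ_v)^{(\epsilon)})$. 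Using $P=0$ to solve $x_1 = (x_{01}x_{02}x_{12} - x_{12}^2 - x_2 x_{01}^2)/(x_{02}^2-4x_2)$, the integrand in \eqref{eq:local_integral} becomes $(|x_{02}^2-4x_2|_v \cdot \max\{1,|x_1|_v,\dots,|x_{12}|_v\}^3)^{-1}$.

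Next I would change variables on each piece using the locally $C^1$ parameterizations. For $\epsilon = 1$, a direct computation (expanding along the first column) yields $\det \bigl(\partial(x_2,x_{01},x_{02},x_{12})/\partial(y_1,y_2,z_1,z_2)\bigr) = -(y_2-z_2)^2$, whose absolute value cancels exactly against $|x_{02}^2-4x_2|_v = |y_2-z_2|_v^2$, leaving the integrand $F_1^{-3}$. For $\epsilon \ne 1$, expanding along the column containing $\partial/\partial b_1$ gives Jacobian determinant $16\epsilon^2 b_2^2$, while $|x_{02}^2-4x_2|_v = |4\epsilon b_2^2|_v$; their ratio is $|4\epsilon|_v$, producing the integrand $|4\epsilon|_v F_\epsilon^{-3}$. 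In both cases the parameterization is bijective onto its image only after imposing the half-space constraint $y_2-z_2 \in M_v$ (resp.\ $2b_2 \in M_v$); dropping the constraint yields a $2$-to-$1$ cover of $V_0(\QQ_v)^{(\epsilon)}$ via the involution $(y_i,z_i) \leftrightarrow (z_i,y_i)$ (resp.\ $(a_i,b_i) \leftrightarrow (a_i,-b_i)$), and this accounts for the factor $1/2$. Summing over $\epsilon$ then gives the claimed identity.

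The main obstacle I anticipate is the Jacobian computation together with the clean cancellation $|\det J|_v \cdot |x_{02}^2-4x_2|_v^{-1} \in \{1,\, |4\epsilon|_v\}$ that is responsible for the simple form of $\omega_v^{(\epsilon)}$; everything else amounts to bookkeeping, with mild care needed at $v=2$ since $|2|_2 \ne 1$ there.
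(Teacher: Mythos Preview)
Your proposal is correct and follows essentially the same route as the paper: reduce to $V_0(\QQ_v)$, split into the pieces $V_0(\QQ_v)^{(\epsilon)}$ via Lemma~\ref{lem:X(k)_components}, apply the $C^1$ parameterizations of Lemmas~\ref{lem:C1-parameterization-1} and \ref{lem:C1-parameterization-epsilon} as changes of variables with the Jacobians $-(y_2-z_2)^2$ and $-16\epsilon^2 b_2^2$, observe the cancellation against $|x_{02}^2-4x_2|_v$, and recover the factor $\tfrac12$ from the $2$-to-$1$ symmetry after dropping the $M_v$-constraint. The paper carries out exactly these steps; your anticipated ``main obstacle'' is indeed the heart of the computation and works as you describe.
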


\begin{proof}
  Our starting point is (\ref{eq:local_integral}).
  Lemma~\ref{lem:X(k)_components} and Lemma~\ref{lem:openness} allow
  us to split the $v$-adic integral over an open subset of $X(\QQ_v)$
  into integrals over the corresponding open subsets of
  $X(\QQ_v)^{(\epsilon)}$.

  Lemma~\ref{lem:C1-parameterization-1} provides a parameterization of
  the dense open subset $U_0^{(1)} = U_0(\QQ_v) \cap X(\QQ_v)^{(1)}$
  of $X(\QQ_v)^{(1)}$ by the open subset $Z_0^{(1)}$ of $\QQ_v^{\oplus 4}$. This
  allows us to make the change of variables
  \begin{equation*}
    x_2=y_2z_2,\quad x_{01}=y_1+z_1,\quad x_{02}=y_2+z_2,\quad x_{12}=y_1z_2+y_2z_1,
  \end{equation*}
  with Jacobian
  \begin{equation*}
    \det
    \begin{pmatrix}
      0 & 1 & 0 & z_2\\
      z_2 & 0 & 1 & z_1\\
      0 & 1 & 0 & z_2\\
      y_2 & 0 & 1 & z_1
    \end{pmatrix} = -(y_2-z_2)^2,
  \end{equation*}
  which turns the expression for $x_1$ into $y_1z_1$ and $x_{0i}^2-4x_i$ into
  $(y_i-z_i)^2$. Hence
  \begin{equation*}
    \omega_{\tH,v}(h^{-1}(U_0^{(1)})) = \int_{Z_0^{(1)}} \frac{\dd y_1 \dd y_2 \dd z_1 \dd z_2}
    {\max\{1,|y_1z_1|,|y_2z_2|,|z_1+y_1|,|z_2+y_2|,|y_1z_2+y_2z_1|\}^3}.
  \end{equation*}
  By symmetry and since $y_i=z_i$ defines a lower-dimensional set
  where the integral is $0$, we can replace $Z_0^{(1)}$ by $\QQ_v^{\oplus 4}$ if we
  introduce a factor $1/2$.
  
  Similarly, Lemma~\ref{lem:C1-parameterization-epsilon} gives a
  parameterization of $U_0^{(\epsilon)}$ by the open subset
  $Z_0^{(\epsilon)}$ of $\QQ_v^{\oplus 4}$. The change of variables
  \begin{equation*}
    x_2 = a_2^2-\epsilon b_2^2,\quad
    x_{01}=2a_1,\quad x_{02}=2a_2,\quad 
    x_{12} = 2(a_1a_2-\epsilon b_1b_2)
  \end{equation*}
  has Jacobian
  \begin{equation*}
    \det
    \begin{pmatrix}
      0 & 2 & 0 & 2a_2\\
      2a_2 & 0 & 2 & 2a_1\\
      0 & 0 & 0 & -2\epsilon b_2\\
      -2\epsilon b_2 & 0 & 0 & -2\epsilon b_1
    \end{pmatrix} = -16\epsilon^2 b_2^2
  \end{equation*}
  and turns the expression for $x_1$ into $a_1^2-\epsilon b_1^2$, and
  $x_{02}^2-4x_2$ into $4\epsilon b_2^2$.  Therefore,
  \begin{equation*}
    \omega_{\tH,v}(h^{-1}(U_0^{(\epsilon)}))\! =\!\! \int_{Z_0^{(\epsilon)}} \frac{|4\epsilon|_v \dd a_1 \dd a_2 \dd b_1 \dd b_2}
    {\max\{1,|a_1^2-\epsilon b_1^2|,|a_2^2-\epsilon b_2^2|,|2a_1|,|2a_2|,|2(a_1a_2-\epsilon b_1b_2)|\}^3}.
  \end{equation*}
  Again by symmetry, we may remove the condition $b_1 \ne 0$ and
  $2b_2 \in M_v$ in the definition of $Z_0^{(\epsilon)}$ while
  introducing a factor $1/2$ to arrive at $\omega_v^{(\epsilon)}$.

  We note that $U_0(\QQ_v)$ is the disjoint union of its subsets
  $U_0^{(\epsilon)}$ for $\epsilon \in R_{\QQ_v}$ by
  Lemma~\ref{lem:X(k)_components}\eqref{it:X(k)_disjoint_union} (where
  we avoid $X(\QQ_v)_\Delta$ because of the condition
  $\Delta_{02} \ne 0$). Furthermore, the complement of $U_0(\QQ_v)$ in
  $X(\QQ_v)$ has lower dimension, and hence does not contribute to the
  integral. Therefore, all $\omega_v^{(\epsilon)}$ add up to
  $\omega_{\tH,v}(\tX(\QQ_v))$.
\end{proof}

\begin{lemma}\label{lem:compute_F_epsilon}
  Let $p$ be a prime. For $y_1,y_2,z_1,z_2 \in \QQ_p$, let
  $\alpha=\min\{v_p(y_1),v_p(y_2)\}$,
  $\beta=\min\{v_p(z_1),v_p(z_2)\}$. Then
  \begin{equation*}
    F_1(y_1,y_2,z_1,z_2)) =
    \begin{cases}
      1, & \alpha \ge 0,\ \beta \ge 0,\\
      p^{-\alpha}, & \alpha < 0,\ \beta \ge 0,\\
      p^{-\beta}, & \alpha \ge 0,\ \beta < 0,\\
      p^{-(\alpha+\beta)}, & \alpha < 0, \beta < 0.
    \end{cases}
  \end{equation*}

  Consider $\epsilon \ne 1$ in $R_{\QQ_p}$. For
  $a_1,b_1,a_2,b_2 \in \QQ_p$, let
  $\alpha = \min\{v_p(a_1),v_p(a_2)\}$ and
  $\beta = \min\{v_p(b_1),v_p(b_2)\}$. Then
  \begin{equation*}
    F_\epsilon(a_1,b_1,a_2,b_2) =
    \begin{cases}
      1, & 0 \le \alpha,\ 0 \le \beta,\\
      p^{-2\alpha}, & \alpha < 0,\ \alpha < \beta,\\
      p^{-2\alpha-\delta}, & \alpha < 0,\ \alpha = \beta,\\
      p^{-2\beta-v_p(\epsilon)}, & \beta < 0,\ \beta < \alpha,
    \end{cases}
  \end{equation*}
  Here, if $p=2$ and either
  \begin{itemize}
  \item $v_p(a_1)=v_p(b_1)<\min\{v_p(a_2),v_p(b_2)\}$, or
  \item $v_p(a_2)=v_p(b_2)<\min\{v_p(a_1),v_p(b_1)\}$, or
  \item $v_p(a_1)=v_p(a_2)=v_p(b_1)=v_p(b_2)$,
  \end{itemize}
  we have $\delta=1$ for $\epsilon \in \{3,7\}$ and $\delta=2$ for
  $\epsilon=5$. In all other cases, we have $\delta=0$.
\end{lemma}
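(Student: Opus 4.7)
The plan is a direct case-by-case analysis of the six terms inside each maximum: I upper-bound every term using the ultrametric inequality and then exhibit one term realizing the claimed value. The terms split into ``trace-like'' ($|y_i+z_i|_p$, $|2a_i|_p$) and ``norm-like'' ($|y_iz_j|_p$, $|y_1z_2+y_2z_1|_p$, $|a_i^2-\epsilon b_i^2|_p$, $|2(a_1a_2-\epsilon b_1b_2)|_p$); the new feature for $F_\epsilon$ is the possibility of extra $p$-adic cancellation in the norm $a_i^2-\epsilon b_i^2$ when both $a_i$ and $b_i$ reach the minimum valuation.

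For $F_1$, I would reformulate the claim as $F_1=\max\{1,p^{-\alpha}\}\cdot\max\{1,p^{-\beta}\}$. The upper bounds $|y_iz_j|_p\le p^{-\alpha-\beta}$ and $|y_i+z_i|_p\le p^{-\min(\alpha,\beta)}$ yield this value in all four sign combinations. For the matching lower bound, the subcase $\alpha,\beta\ge 0$ is trivial; in $\alpha<0\le\beta$, picking $i^*$ with $v_p(y_{i^*})=\alpha$ gives $v_p(y_{i^*}+z_{i^*})=\alpha$ without cancellation (since $v_p(z_{i^*})\ge 0>\alpha$); in $\alpha,\beta<0$, I choose $i^*,j^*$ realizing $\alpha,\beta$ and argue that either $|y_{i^*}z_{i^*}|_p=p^{-\alpha-\beta}$ (if $i^*=j^*$), or else the crossed term $|y_{i^*}z_{j^*}+y_{j^*}z_{i^*}|_p$ realizes $p^{-\alpha-\beta}$ (cancellation there would force all four products $y_iz_j$ to have valuation $\alpha+\beta$, in which case $|y_1z_1|_p$ already works).

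For $F_\epsilon$ with $\epsilon\ne 1$, the first case ($\alpha,\beta\ge 0$), the second case ($\alpha<0$, $\alpha<\beta$), and the fourth case ($\beta<0$, $\beta<\alpha$) are routine: using $v_p(\epsilon)\le 1$ and $|\alpha-\beta|\ge 1$ where needed, one picks $i$ so that the two summands of $a_i^2-\epsilon b_i^2$ have distinct $p$-adic valuations, hence no cancellation, and $|a_i^2-\epsilon b_i^2|_p$ directly realizes $p^{-2\alpha}$ or $p^{-2\beta-v_p(\epsilon)}$; the remaining terms are shown to be no larger by straightforward ultrametric estimates. The delicate third case is $\alpha=\beta<0$. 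The listed configurations (i)--(iii) correspond precisely to the possibilities where every index $i$ satisfying $\min\{v_p(a_i),v_p(b_i)\}=\alpha$ has $v_p(a_i)=v_p(b_i)=\alpha$; in the ``other'' subcase there is always an index $j$ with $|a_j^2-\epsilon b_j^2|_p=p^{-2\alpha}$, and so $\delta=0$. It remains to determine $\delta$ in configurations (i)--(iii). Writing $a_i=p^\alpha a_i'$, $b_i=p^\alpha b_i'$ with $a_i',b_i'$ units, the value $v_p((a_i')^2-\epsilon(b_i')^2)$ governs $\delta$: for $p$ odd, a nontrivial $\epsilon\in R_{\QQ_p}$ is either a non-residue unit (no mod-$p$ cancellation) or has $v_p(\epsilon)=1$ (summands have distinct valuations), so $\delta=0$; the same holds for $p=2$ with $v_2(\epsilon)=1$; the only subcase with $\delta\ne 0$ is $p=2$ with $\epsilon\in\{3,5,7\}$, where $(\text{odd})^2\equiv 1\pmod 8$ gives $(a_i')^2-\epsilon(b_i')^2\equiv 1-\epsilon\pmod 8$, hence $\delta=v_2(1-\epsilon)\in\{1,2\}$ as stated.

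The main obstacle is confirming, in configurations (i)--(iii) with $p=2$ and $\epsilon\in\{3,5,7\}$, that the remaining five terms in the maximum do not exceed $p^{-2\alpha-\delta}$. The trace terms $|2a_i|_p\le 2^{-1-\alpha}$ are smaller since $-1-\alpha\le -2\alpha-\delta$ for $\alpha\le 1-\delta$, which holds under $\alpha\le -1$ and $\delta\le 2$. The potentially dangerous term is $|2(a_1a_2-\epsilon b_1b_2)|_p$ in configuration (iii), where $v_2(a_1a_2)=v_2(\epsilon b_1b_2)=2\alpha$ might admit cancellation; but $a_1'a_2'$ and $\epsilon b_1'b_2'$ are both odd, so their difference is even, giving $v_2(2(a_1a_2-\epsilon b_1b_2))\ge 2\alpha+2\ge 2\alpha+\delta$, which seals the bound.
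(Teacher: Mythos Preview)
Your proof is correct and follows essentially the same approach as the paper's: ultrametric upper bounds for every term, identification of a term achieving the maximum, and the key observation that for $\epsilon\ne 1$ the only source of cancellation in $a_i^2-\epsilon b_i^2$ is the case $p=2$, $v_2(\epsilon)=0$ with $v_2(a_i)=v_2(b_i)$, where $(\text{odd})^2\equiv 1\pmod 8$ forces $v_2((a_i')^2-\epsilon(b_i')^2)=v_2(1-\epsilon)$. Your treatment is in fact considerably more explicit than the paper's---the paper compresses the $p\ne 2$ case into the single remark $|a_i^2-\epsilon b_i^2|_p=\max\{|a_i^2|_p,|\epsilon b_i^2|_p\}$ and dismisses $F_1$ as ``similar, but easier''---and your characterization of the three bulleted configurations as exactly those in which every index $i$ with $\min\{v_p(a_i),v_p(b_i)\}=\alpha$ has both valuations equal to $\alpha$ is a tidy organizing principle not made explicit in the paper.
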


\begin{proof}
  For $\epsilon \ne 1$ and $p\ne 2$, we observe
  $|a_i^2 -\epsilon b_i^2|_p = \max\{|a_i^2|_p,|\epsilon b_i^2|_p\}$ since
  $v_p(u-\epsilon v)=0$ for $u,v\in \ZZ_p^\times$ because
  $\epsilon \notin \QQ_p^{\times 2}$. Hence
  $|2(a_1a_2-\epsilon b_1b_2)|_p \le \max\{|a_1a_2|_p,|\epsilon
  b_1b_2|_p\}$ cannot be larger than both
  $|a_1^2 -\epsilon b_1^2|,|a_2^2 -\epsilon b_2^2|$. Furthermore,
  $|2a_i|_v$ cannot be larger than both $1$ and $|a_i^2|_p$.

  For $p=2$, $\epsilon \ne 1$, $|\epsilon|_2=1$ and $a_i=2^\alpha u$,
  $b_i=2^\alpha v$ with $u,v \in \ZZ_2^\times$, we have
  $|a_i^2-\epsilon b_i^2|_2 = 2^{-2\alpha-\delta}$ since
  $u^2,v^2 \equiv 1 \pmod 8$, so that $v_2(u^2-\epsilon v^2)$ is $1$
  for $\epsilon = 3,7$ and $2$ for $\epsilon = 5$.  We observe that
  this affects $F_\epsilon(a_1,b_1,a_2,b_2)$ only in the three cases listed
  in our statement. (We note that also in these cases,
  $|2(a_1a_2-\epsilon b_1b_2)|_2 \le 2^{-2}\max\{|a_1a_2|_2,|\epsilon
  b_1b_2|_2\}$ cannot be the maximum.)
  
  The case $F_1$ is similar, but easier.
\end{proof}

\begin{lemma}\label{lem:p-adic_densities}
  For any prime $p$, we have
  \begin{equation*}
    \omega_p^{(1)} = \frac 1 2 \left(1+\frac 1 p+\frac 1{p^2}\right)^2,\qquad \omega_p^{(\epsilon)} = \frac{|4\epsilon|_p}{2}
    \begin{cases}
      1+\frac 1 p+\frac 1{p^2}, & v_p(\epsilon)=1,\\
      1+\frac 1{p^2}+\frac 1{p^4}, & p \ne 2,\ \epsilon = u,\\
      \frac{7}{4}, & p = 2,\ \epsilon = 3,7,\\
      \frac{21}{4}, & p = 2,\ \epsilon = 5,
    \end{cases}
  \end{equation*}
  and
  \begin{equation*}
    \omega_{\tH,p}(\tX(\QQ_p)) = \left(1+\frac 1 p+\frac 1{p^2}\right)^2.
  \end{equation*}
\end{lemma}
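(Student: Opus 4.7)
The plan is to evaluate each of the integrals $\omega_p^{(1)}$ and $\omega_p^{(\epsilon)}$ provided by Lemma~\ref{lem:omega_v_parts} by stratifying $\QQ_p^{\oplus 4}$ according to the valuations $\alpha,\beta$ that control $F_1,F_\epsilon$ in Lemma~\ref{lem:compute_F_epsilon}. The basic ingredient is the Haar-measure identity
\begin{equation*}
  \mathrm{meas}\{(u_1,u_2) \in \QQ_p^{\oplus 2} \mid \min(v_p(u_1),v_p(u_2)) = \gamma\} = (1-p^{-2})\,p^{-2\gamma} \qquad(\gamma \in \ZZ),
\end{equation*}
which reduces each of these integrals to a double geometric series in $(\alpha,\beta) \in \ZZ^2$. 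After computing each $\omega_p^{(\epsilon)}$, I would sum over $\epsilon \in R_{\QQ_p}$ to obtain $\omega_{\tH,p}(\tX(\QQ_p))$.

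For $\omega_p^{(1)}$, I split the sum according to the signs of $\alpha$ and $\beta$, insert the four values of $F_1$ from Lemma~\ref{lem:compute_F_epsilon}, and sum the resulting four geometric series; the telescoping collects to $\tfrac{1}{2}(1+p^{-1}+p^{-2})^2$. For $\omega_p^{(\epsilon)}$ with $\epsilon \ne 1$ and either $p \ne 2$ or $v_p(\epsilon)=1$, the four-piece formula for $F_\epsilon$ (with $\delta=0$) can be summed in the same way; depending on whether $v_p(\epsilon)=1$ or $\epsilon=u$ is a unit, the four series collapse to $(1+p^{-1}+p^{-2})$ or $(1+p^{-2}+p^{-4})$ respectively, multiplied by $|4\epsilon|_p/2$.

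The delicate case is $p=2$ and $\epsilon \in \{3,5,7\}$, where within the diagonal stratum $\alpha=\beta<0$ the value of $\delta$ depends on the finer valuation pattern of $(a_1,b_1,a_2,b_2)$. Here I would further partition this stratum into the three exceptional sub-strata enumerated in Lemma~\ref{lem:compute_F_epsilon} (where $\delta \in \{1,2\}$) and their complement (where $\delta=0$); each sub-stratum has an explicitly computable Haar measure, and repeating the geometric-series bookkeeping produces the factors $7/4$ and $21/4$ recorded in the statement.

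Finally, I would sum $\omega_p^{(\epsilon)}$ over $\epsilon \in R_{\QQ_p}$. For $p \ne 2$, this is the elementary identity
\begin{equation*}
  \tfrac{1}{2}(1+p^{-1}+p^{-2})^2 + p^{-1}(1+p^{-1}+p^{-2}) + \tfrac{1}{2}(1+p^{-2}+p^{-4}) = (1+p^{-1}+p^{-2})^2,
\end{equation*}
where the middle term combines the two contributions from $\epsilon \in \{p,up\}$. For $p=2$, I would plug in the eight numerical values and check directly that they add to $(7/4)^2 = 49/16$. The main obstacle is the dyadic bookkeeping for anisotropic odd $\epsilon$: the $\delta$-correction forces a careful sub-stratification of the locus $\alpha=\beta<0$ by joint valuation profile, and identifying the right sub-strata and their measures is precisely what yields the constants $7/4$ and $21/4$; once these are in hand, the remaining algebra is routine.
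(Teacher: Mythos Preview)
Your proposal is correct and follows essentially the same approach as the paper: stratify $\QQ_p^{\oplus 4}$ by the valuations $\alpha,\beta$ governing $F_1,F_\epsilon$ via Lemma~\ref{lem:compute_F_epsilon}, reduce to geometric series, and handle the $p=2$, $\epsilon\in\{3,5,7\}$ diagonal by the finer sub-stratification that tracks $\delta$. The only organizational difference is that the paper carries out the nine-fold stratification by \emph{which} coordinate of $(a_1,a_2)$ and of $(b_1,b_2)$ realizes the minimum from the outset (so the $\delta$-cases fall out automatically), whereas you use the coarser measure identity for $\{\min(v_p(u_1),v_p(u_2))=\gamma\}$ and refine only on the diagonal stratum when $p=2$; both routes lead to the same series and the same final identities, including your displayed check for $p\ne 2$ and the numerical check $49/16=(7/4)^2$ at $p=2$.
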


\begin{proof}
  Let $\epsilon \ne 1$. In the notation of
  Lemma~\ref{lem:compute_F_epsilon}, we first compute the contribution
  to $\omega_v^{(\epsilon)}$ as in Lemma~\ref{lem:omega_v_parts} of
  the four cases where $\{i,j\}=\{k,l\}=\{1,2\}$ and
  $\alpha=v_p(a_i)<v_p(a_j)$ and $\beta=v_p(b_k)<v_p(b_l)$. Since our
  $p$-adic integrals are computed with respect to the Haar measure
  $\mu_p$ on $\QQ_p$ normalized such that
  \begin{equation*}
    \mu_p\{x \in \QQ_p : v_p(x)=\gamma\} = \left(1-\frac 1 p\right) p^{-\gamma},\quad
    \mu_p\{x \in \QQ_p: v_p(x)>\gamma\} = \frac 1 p \cdot p^{-\gamma}
  \end{equation*}
  for any $\gamma \in \ZZ$, we obtain
  \begin{align*}
    &\sum_{\alpha,\beta \in \ZZ} \frac{|4\epsilon|_p \cdot \mu_p\{v_p(a_1)=\alpha\}\cdot \mu_p\{v_p(a_2)>\alpha\}  \cdot  \mu_p\{v_p(b_1)=\beta\}\cdot \mu_p\{v_p(b_2)>\beta\}}{2F_\epsilon(1,y_1,y_2)}\displaybreak\\
    &=\frac{|4\epsilon|_p}{2} \left(1-\frac 1 p\right)^2\left(\frac 1 p\right)^2\cdot\\
    &\left(\sum_{\alpha,\beta \ge 0} \frac{p^{-2\alpha-2\beta}}{1}+\sum_{\alpha<0,\alpha<\beta} \frac{p^{-2\alpha-2\beta}}{p^{-6\alpha}}+\sum_{\alpha=\beta<0} \frac{p^{-2\alpha-2\beta}}{p^{-6\alpha-3\delta}}+\sum_{\beta<0,\alpha>\beta} \frac{p^{-2\alpha-2\beta}}{p^{-6\beta-3v_p(\epsilon)}}\right)\\
    &=\frac{|4\epsilon|_p}{2}\left(1-\frac 1 p\right)^2\left(\frac 1 p\right)^2\left(\frac{1}{\left(1-\frac 1 {p^2}\right)^2}+\frac{p^{-4}}{\left(1-\frac 1 {p^2}\right)^2}+\frac{p^{3\delta-2}}{1-\frac 1 {p^2}}+\frac{p^{3v_p(\epsilon)-4}}{\left(1-\frac 1 {p^2}\right)^2}\right)\\
    &=\frac{|4\epsilon|_p}{2}\left(\frac 1 p\right)^2\frac{1+p^{-4}+p^{3\delta-2}-p^{3\delta-4}+p^{3v_p(\epsilon)-4}}{\left(1+\frac 1 p\right)^2}.
  \end{align*}
  Note that $\delta \ne 0$ in precisely two of the four cases when
  $p=2$ and $\epsilon \in \{3,5,7\}$. The four cases where either
  $v_p(a_i)<v_p(a_j)$ or $v_p(b_k)<v_p(b_l)$ is replaced by an
  equality give the same result, but with one factor $\frac 1 p$ replaced by
  $1-\frac 1 p$; in this case, we always have $\delta=0$. Finally, the
  case where $v_p(a_1)=v_p(a_2)$ and $v_p(b_1)=v_p(b_2)$ gives the same
  as the first cases (possibly with $\delta > 0$), but with the factor
  $(\frac 1 p)^2$ replaced by $(1-\frac 1 p)^2$.

  The case $\epsilon = 1$ is similar, but easier. To compute
  $\omega_{\tH,p}(\tX(\QQ_p))$, we add up all the cases.
\end{proof}

By \eqref{eq:convergence_factors} and Lemma~\ref{lem:p-adic_densities}, we have
\begin{equation}\label{eq:archimedean_factor}
  \prod_p \lambda_p^{-1}\omega_{\tH,p}(\tX(\QQ_p))  = \prod_p\left(1-\frac 1 p\right)^2\left(1+\frac 1 p+\frac 1{p^2}\right)^2 = \frac{1}{\zeta(3)^2}.
\end{equation}
Finally, we compute the real density.

\begin{lemma}\label{lem:real_density_+}
  We have
  \begin{equation*}
    \omega_\infty^{(1)} = \frac{3\vol(S_1^+(e))}{4}.
  \end{equation*}
\end{lemma}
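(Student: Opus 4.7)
The plan is to evaluate $\vol(S_1^+(e))$ directly by unfolding the six-dimensional integral so that the resulting expression matches the formula for $\omega_\infty^{(1)}$ from Lemma~\ref{lem:omega_v_parts}. First I would substitute $y_i = y_0 \tilde y_i$ and $z_i = z_0 \tilde z_i$ (for $i = 1, 2$) in the integral defining $\vol(S_1^+(e))$; this substitution is a diffeomorphism off the measure-zero locus $\{y_0 z_0 = 0\}$ with Jacobian $y_0^2 z_0^2$. Under it the six height bounds $|y_0 z_0|, |y_i z_i|, |y_0 z_i + y_i z_0|, |y_1 z_2 + y_2 z_1| \le 1$ reduce to the single inequality
\[
|y_0 z_0|\,F_1(\tilde y_1, \tilde y_2, \tilde z_1, \tilde z_2) \le 1,
\]
while the ratio condition becomes $e^{-1}\,\tilde B/\tilde A < |y_0|/|z_0| \le \tilde B/\tilde A$ with $\tilde A = \max\{1, |\tilde y_1|, |\tilde y_2|\}$ and $\tilde B = \max\{1, |\tilde z_1|, |\tilde z_2|\}$.

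For each fixed $(\tilde y, \tilde z)$ I would then apply Fubini to the inner $(y_0, z_0)$-integral. By sign symmetry this reduces to $y_0, z_0 > 0$ at the cost of a factor of $4$. Substituting $s = y_0 z_0$ and $t = y_0/z_0$ gives $\dd y_0\,\dd z_0 = \dd s\,\dd t/(2t)$ and $y_0^2 z_0^2 = s^2$; the region decouples into $s \in (0, 1/F_1]$ and a $t$-interval whose endpoints both depend on $\tilde B/\tilde A$ but whose log-length is exactly $1$. Hence the inner integral separates into $\int_0^{1/F_1} s^2\,\dd s \cdot \int \dd t/t$ and yields $2/(3F_1^3)$, independently of $\tilde y, \tilde z$.

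Integrating the resulting four-dimensional expression $2/(3F_1^3)$ over $(\tilde y, \tilde z) \in \RR^4$ and comparing with $\omega_\infty^{(1)} = \tfrac{1}{2} \int_{\RR^4} \dd\tilde y\,\dd\tilde z/F_1^3$ from Lemma~\ref{lem:omega_v_parts} gives $\vol(S_1^+(e)) = (4/3)\,\omega_\infty^{(1)}$, which is the claim. I do not foresee any substantial obstacle; the crucial point is that the factor $e$ in the definition of $S_1^+(e)$ is engineered so that the $t$-interval has log-length exactly $1$, making the $(\tilde y, \tilde z)$-dependence of its endpoints irrelevant to the final answer.
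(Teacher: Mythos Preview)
Your argument is correct and considerably more direct than the paper's. The paper works from both ends: it first manipulates $\vol(S_1^+(e))$ through a sequence of substitutions (normalizing $\zz$ by its largest coordinate, then rescaling $\yy$ by $z_0$, then a further quadratic change $w_0=z_0^2$) to express it via an auxiliary function $J(\uu)=\int_{F_1(\uu,\ww)\le 1}\dd\ww$, and separately unfolds $\omega_\infty^{(1)}$ into a comparable $J$-expression; matching the two requires splitting into cases according to which $|v_i|$ is largest and invoking symmetry twice more. Your single substitution $y_i=y_0\tilde y_i$, $z_i=z_0\tilde z_i$ followed by $(s,t)=(y_0z_0,\,y_0/z_0)$ collapses the whole computation: the height condition depends only on $s$, the ratio condition only on $t$, and the $t$-interval has log-length $1$ regardless of $\tilde A,\tilde B$. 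This is the same underlying observation the paper exploits, but packaged so that no auxiliary function or case split is needed. The paper's longer route does make the bihomogeneity \eqref{eq:F_1_homogeneous} and the role of the parameter $e$ more visibly parallel to the analogous computation for $\omega_\infty^{(-1)}$ in the next lemma, but your version is cleaner as a stand-alone proof.
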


\begin{proof}
  For $\xx = (x_0,x_1,x_2) \in \RR^{\oplus 3}$, we use the notation
  $|\xx| = \max\{|x_0|,|x_1|,|x_2|\}$. By definition (see
  Proposition~\ref{prop:asymptotic_formula}), we have
  \begin{equation*}
    \vol(S_1^+(e)) = \int_{F_1(\yy,\zz) \le 1,\ e^{-1} < |\yy|/|\zz| \le 1} \dd\yy \dd\zz.
  \end{equation*}
  By symmetry in $\zz$, this is
  \begin{equation*}
    = 6 \int_{F_1(\yy,\zz) \le 1,\ e^{-1} < |\yy|/z_0
      \le 1,\ |\zz|=z_0} \dd\yy \dd\zz.
  \end{equation*}
  For all $\lambda,\mu \in\RR$ and $\yy,\zz$, we observe that
  \begin{equation}\label{eq:F_1_homogeneous}
    F_1(\lambda \yy,\mu \zz) = |\lambda\mu|\cdot F_1(\yy,\zz).
  \end{equation}
  Using (\ref{eq:F_1_homogeneous}), the change of variables $z_1=z_0t_1$ and
  $z_2=z_0t_2$ gives
  \begin{equation*}
    = 6 \int_{z_0=0}^\infty z_0^2 \int_{|t_1|\le 1,\ |t_2| \le 1}
    \int_{z_0 F_1(\yy,1,t_1,t_2)\le 1,\ e^{-1} < |\yy|/z_0
      \le 1} \dd \yy \dd t_2 \dd t_1 \dd z_0.
  \end{equation*}
  The change of variables $y_i=z_0u_i$ gives
  \begin{equation*}
    = 6 \int_{z_0=0}^\infty z_0^5 \int_{|t_1|\le 1,\ |t_2| \le 1}
    \int_{z_0^2 F_1(\uu,1,t_1,t_2)\le 1,\ e^{-1}<|\uu|\le 1} \dd \uu \dd t_2 \dd t_1 \dd z_0.
  \end{equation*}
  The change of variables $w_0=z_0^2$, $w_1=z_0^2t_1$, $w_2=z_0^2t_2$ (with
  $\dd \ww = 2z_0^5 \dd z_0 \dd t_1 \dd t_2$) with (\ref{eq:F_1_homogeneous})
  gives
  \begin{equation*}
    = 3 \int_{w_0=0}^\infty \int_{|w_1|\le w_0,\ |w_2| \le w_0}
    \int_{F_1(\uu,\ww)\le 1,\ e^{-1}<|\uu|\le 1} \dd \uu \dd \ww.
  \end{equation*}
  By symmetry in $\ww$, this is
  \begin{equation*}
    = \frac{1}{2} 
    \int_{F_1(\uu,\ww)\le 1,\ e^{-1}<|\uu|\le 1} \dd \uu \dd \ww.    
  \end{equation*}
  Let $J(\uu):= \int_{F_1(\uu,\ww) \le 1} \dd \ww$.  Then
  \begin{equation*}
    \vol(S_1^+(e)) = \frac 1 2 \int_{e^{-1} < |\uu| \le 1} J(\uu) \dd \uu.
  \end{equation*}
  By symmetry, this is
  \begin{equation}\label{eq:J}
    =\frac 3 2 \int_{e^{-1} < |u_0| \le 1,\ |\uu| = |u_0|} J(\uu) \dd \uu.
  \end{equation}
  Using $J(c\uu) = |c|^{-3}J(\uu)$ for $c \in \RR_{\ne 0}$, this is
  \begin{equation*}
    =\frac 3 2 \int_{e^{-1} < |u_0| \le 1,\ |\uu| = |u_0|} |u_0|^{-3}J(1,u_1/u_0,u_2/u_0) \dd \uu.
  \end{equation*}
  The change of variables $u_i=u_0v_i$ for $i=1,2$ gives
  \begin{equation*}
    =\frac 3 2 \int_{|v_1|,|v_2| \le 1} J(1,v_1,v_2) \dd v_1 \dd v_2
    \int_{e^{-1} < |u_0| \le 1} |u_0|^{-1} \dd u_0,
  \end{equation*}
  where the integral over $u_0$ is clearly $2$. Hence
  \begin{equation}\label{eq:vol_2var}
    \vol(S_1^+(e)) = 3 \int_{|v_1|,|v_2| \le 1} J(1,v_1,v_2) \dd v_1 \dd v_2.
  \end{equation}

  Now we consider $\omega_\infty^{(1)}$, as defined in
  Lemma~\ref{lem:omega_v_parts}.  Using
  \begin{equation*}
    \frac{1}{s} = \frac 1 2 \int_{|t| \ge s} \frac{\dd t}{|t|^2} ,
  \end{equation*}
  we have
  \begin{equation*}
    \omega_\infty^{(1)} = \frac 1 4 \int_{F_1(1,y_1,y_2,1,z_1,z_2) \le |t|^{1/3}} |t|^{-2} \dd t\dd y_1 \dd y_2 \dd z_1
    \dd z_2.
  \end{equation*}
  The coordinate change $t=x_0^{-3}$ (with $|t|^{-2} \dd t = 3|x_0|^2 \dd
  x_0$) gives
  \begin{equation*}
    = \frac 3 4 \int_{|x_0|\cdot F_1(1,y_1,y_2,1,z_1,z_2) \le 1} |x_0|^2 \dd x_0 \dd y_1 \dd y_2 \dd z_1
      \dd z_2.
  \end{equation*}
  With (\ref{eq:F_1_homogeneous}), the coordinate change $y_j=x_j/x_0$ (with
  $|x_0| \dd y_j = \dd x_j$) for $j=1,2$ gives
  \begin{equation*}
    = \frac 3 4 \int_{F_1(\xx,1,z_1,z_2) \le 1} \dd \xx
    \dd z_1 \dd z_2.
  \end{equation*}
  With $J$ defined as above and by symmetry, this is
  \begin{equation*}
    = \frac 3 4 \int_{\RR^{\oplus 2}} J(1,z_1,z_2) \dd z_1 \dd z_2.
  \end{equation*}
  Using
  \begin{equation*}
    \frac 1 2 \int_{e^{-1} \le |v_0| \le 1} |v_0|^{-1} \dd v_0 = 1,
  \end{equation*}
  this is
  \begin{equation*}
    =\frac 3 8 \int_{e^{-1} \le |v_0| \le 1} |v_0|^{-1} J(1,z_1,z_2) \dd v_0
    \dd z_1 \dd z_2.
  \end{equation*}
  Using $|v_0|^{-1} J(1,z_1,z_2) = |v_0|^2 J(v_0,v_0z_1,v_0z_2)$, the
  coordinate change $z_j=v_j/v_0$ (with $\dd z_j = |v_0|^{-1} \dd v_j$) gives
  \begin{equation*}
    =\frac 3 8 \int_{e^{-1} \le |v_0| \le 1} J(\vv) \dd \vv.
  \end{equation*}
  We split this into three integrals depending on which $|v_i|$ is the
  largest, and we use symmetry for the cases $i=1,2$:
  \begin{equation}\label{eq:omega_infty_sum}
    = \frac 3 8 \int_{e^{-1}\le |v_0| \le 1,\ |\vv|=|v_0|} J(\vv) \dd \vv +
    \frac 3 4 \int_{e^{-1}\le |v_0| \le 1,\ |\vv|=|v_1|} J(\vv) \dd \vv.
  \end{equation}
  The first summand is clearly $\frac 1 4 \vol(S_1^+(e))$ (see
  (\ref{eq:J})). The second summand is
  \begin{equation*}
    \frac 3 4 \int_{e^{-1}\le |v_0| \le 1, |\vv|=|v_1|} |v_1|^{-3}
    J(v_0/v_1,1,v_2/v_1) \dd \yy
  \end{equation*}
  The change of variables $y_i=y_1v_i$ for $i=0,2$ gives
  \begin{equation*}
    =\frac 3 4 \int_{|v_0|,|v_2|\le 1} J(v_0,1,v_2)\left(\int_{e^{-1}\le
        |y_1v_0| \le 1} |y_1|^{-1} \dd y_1\right)\dd v_0 \dd v_2.
  \end{equation*}
  Here, the integral over $y_1$ is $2$, hence we have
  \begin{equation*}
    = \frac 3 2 \int_{|v_0|,|v_2|\le 1} J(v_0,1,v_2)\dd v_0 \dd v_2.
  \end{equation*}
  Hence the second summand in (\ref{eq:omega_infty_sum}) is
  $\frac 1 2 \vol(S_1^+(e))$ by symmetry and comparing to
  (\ref{eq:vol_2var}). In total,
  $\omega_\infty = \frac 3 4 \vol(S_1^+(e))$.
\end{proof}

\begin{lemma}\label{lem:real_density_-}
  We have
  \begin{equation*}
    \omega_\infty^{(-1)} = \frac{6\vol(S_1^-)}{\pi}.
  \end{equation*}
\end{lemma}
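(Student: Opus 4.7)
My plan is to mirror the proof of Lemma~\ref{lem:real_density_+} but exploit the complex-analytic structure that is natural when $\epsilon=-1$ over $\RR$. By Lemma~\ref{lem:omega_v_parts} (with $|4\cdot(-1)|_\infty=4$),
$$\omega_\infty^{(-1)} = 2\int_{\RR^{\oplus 4}} \frac{\dd a_1\dd b_1\dd a_2\dd b_2}{F_{-1}(a_1,b_1,a_2,b_2)^3}.$$
Setting $y_j=a_j+ib_j$ identifies $\dd a_j\dd b_j$ with Lebesgue measure $\dd\mu(y_j)$ on $\CC$. Using $a_j^2+b_j^2=|y_j|^2$ and $a_1a_2+b_1b_2=\Re(y_1\overline{y_2})$, we get $F_{-1}(y_1,y_2)=\widetilde{F}_{-1}(1,y_1,y_2)$, where
$$\widetilde{F}_{-1}(y_0,y_1,y_2) := \max\bigl\{|y_j|^2,\ 2|\Re(y_j\overline{y_k})| : 0\le j<k\le 2\bigr\}$$
is homogeneous of degree $2$ under simultaneous complex scaling of $(y_0,y_1,y_2)$. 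By inspection, $S_1^- = \{\yy\in\CC^{\oplus 3} : \widetilde{F}_{-1}(\yy)\le 1\}$.

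The main idea is to introduce $y_0$ as a homogenizing complex variable in a single step, avoiding the repeated one-dimensional splittings used in the real case. I will apply the elementary identity
$$\frac{1}{s^3} = \frac{3}{\pi}\int_{|y_0|^2 s\le 1}|y_0|^4\dd\mu(y_0) \qquad (s > 0),$$
which is immediate from $\int_0^{2\pi}\int_0^{s^{-1/2}}r^5\dd r\dd\theta = \pi s^{-3}/3$, with $s=\widetilde{F}_{-1}(1,y_1,y_2)$ inside the integral for $\omega_\infty^{(-1)}$. This yields
$$\omega_\infty^{(-1)} = \frac{6}{\pi}\int_{|y_0|^2\widetilde{F}_{-1}(1,y_1,y_2)\le 1}|y_0|^4\dd\mu(y_0)\dd\mu(y_1)\dd\mu(y_2).$$

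I then change variables $(y_1,y_2)\mapsto (x_1,x_2) := (y_0y_1,y_0y_2)$ at fixed $y_0\in\CC^\times$. Each complex change $y_j\mapsto x_j/y_0$ has real Jacobian $|y_0|^{-2}$, so $\dd\mu(y_1)\dd\mu(y_2) = |y_0|^{-4}\dd\mu(x_1)\dd\mu(x_2)$, which cancels the factor $|y_0|^4$ from the homogenizing integral. By the degree-$2$ homogeneity, $|y_0|^2\widetilde{F}_{-1}(1,y_1,y_2) = \widetilde{F}_{-1}(y_0,x_1,x_2)$, so the integral collapses to $\tfrac{6}{\pi}\vol(S_1^-)$.

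I do not foresee a serious obstacle. Unlike the $\epsilon=1$ case, there is no ratio $|\yy|/|\zz|$ to track, because $\zz=\overline{\yy}$ automatically forces $|\yy|=|\zz|$; in particular no logarithmic cutoff $e^{-1}$ appears and the repeated splittings producing the two summands in Lemma~\ref{lem:real_density_+} collapse to the single complex substitution above. The only care required is in verifying the constant $3/\pi$ in the homogenizing identity and in tracking the complex Jacobian $|y_0|^{-4}$, which together account for the final factor $6/\pi$ replacing the $3/4$ from the real case.
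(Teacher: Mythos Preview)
Your proof is correct. Both your argument and the paper's hinge on the same idea: the function $\widetilde F_{-1}$ is homogeneous of degree~$2$ under simultaneous \emph{complex} scaling, which allows one to relate the four-dimensional integral defining $\omega_\infty^{(-1)}$ to the six-dimensional volume $\vol(S_1^-)$ by introducing (or removing) a homogenizing variable $y_0\in\CC$. The difference is purely in execution. The paper starts from $\vol(S_1^-)$, passes to polar coordinates $z_j=\sqrt{r_j}e^{i\phi_j}$, integrates out the overall phase $\phi_0$ (producing the factor $2\pi$), rescales the radii, and then uses the one-dimensional identity $s^{-1}=\int_{t\ge s}t^{-2}\dd t$ to reach $\omega_\infty^{(-1)}$. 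You go in the opposite direction and collapse these steps into the single two-dimensional identity $s^{-3}=\tfrac{3}{\pi}\int_{|y_0|^2s\le 1}|y_0|^4\dd\mu(y_0)$, followed by the complex substitution $x_j=y_0y_j$ whose Jacobian $|y_0|^{-4}$ exactly cancels the weight. Your route is shorter and avoids the polar bookkeeping; the paper's route is more elementary in that it never needs the complex Jacobian, only real changes of variables in $(r_j,\phi_j)$. Either way the constant $6/\pi$ falls out correctly.
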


\begin{proof}
  By definition,
  \begin{equation*}
    \vol(S_1^-) =\vol\{\zz \in \CC^{\oplus 3} : F_{-1}(\zz) \le 1\} = \int_{F_{-1}(\zz) \le 1} \dd \zz.
  \end{equation*}
  In polar coordinates $z_j = \sqrt{r_j} e^{i\phi_j}$ (with
  \begin{equation*}
    |z_j|^2 = r_j, \quad|z_j\overline{z_k}+z_k\overline{z_j}| =
    \sqrt{r_jr_k}|e^{i(\phi_j-\phi_k)}+e^{i(\phi_k-\phi_j)}| =
    2\sqrt{r_jr_k}|\cos(\phi_j-\phi_k)|,
  \end{equation*}
  and $\dd z_j = \frac 1 2 \dd r_j \dd \phi_j$), we have
  \begin{equation*}
    \frac{6\vol(S_1^-)}{\pi} = \frac 3{4\pi} \int_{r_j \in \RR_{\ge 0},\ \phi_j \in [0,2\pi[,\ G(r_0,
      \dots, \phi_2)\le 1} \dd r_0 \dots \dd \phi_2,
  \end{equation*}
  where
  \begin{equation*}
    G(r_0,\dots, \phi_2):=\max\left\{
    \begin{aligned}
      &r_0,r_1,r_2,2\sqrt{r_0r_1}\cos(\phi_0-\phi_1),\\
      &2\sqrt{r_0r_2}\cos(\phi_0-\phi_2),2\sqrt{r_1r_2}\cos(\phi_1-\phi_2)
    \end{aligned}
    \right\}.
  \end{equation*}
  We make the change of coordinates $\phi_1'=\phi_1-\phi_0$,
  $\phi_2=\phi_2-\phi_0$ and obtain
  \begin{equation*}
    = \frac 3{4\pi} \int_{r_j \in \RR_{\ge 0},\ \phi_0,\phi_1',\phi_2' \in
      [0,2\pi[,\ G(r_0,r_1,r_2,0,\phi_1',\phi_2') \le 1} \dd r_0 \dd r_1 \dd
    r_2 \dd \phi_0 \dd \phi_1' \dd \phi_2'.
  \end{equation*}
  We perform the integration over $\phi_0$ and get
  \begin{equation*}
    = \frac{3}{2}\int_{r_j \in \RR_{\ge 0},\ \phi_1',\phi_2' \in
      [0,2\pi[,\ G(r_0,r_1,r_2,0,\phi_1',\phi_2') \le 1} \dd r_0 \dd r_1 \dd
    r_2 \dd \phi_1' \dd \phi_2'.
  \end{equation*}
  We make the change of coordinates $r_1=r_0r_1'$, $r_2=r_0r_2'$ with
  \begin{equation*}
    G(r_0,r_0r_1',r_0r_2',0,\phi_1',\phi_2') = r_0 \cdot
    G(1,r_1',r_2',0,\phi_1',\phi_2')
  \end{equation*}
  and obtain
  \begin{equation*}
    = \frac{3}{2}\int_{r_0,r_1',r_2' \in \RR_{\ge 0},\ \phi_1',\phi_2' \in
      [0,2\pi[,\ G(1,r_1',r_2',0,\phi_1',\phi_2') \le r_0^{-1}} r_0^2 \dd r_0 \dd r_1' \dd
    r_2' \dd \phi_1' \dd \phi_2'.
  \end{equation*}
  We make the change of coordinates $r_0 = t^{-1/3}$ (with $r_0^2 \dd r_0 =
  \frac 1 3 t^{-2} \dd t$) and obtain
  \begin{equation*}
    = \frac{1}{2} \int_{t,r_1',r_2' \ge 0,\ \phi_1',\phi_2' \in
      [0,2\pi[,\ G(1,r_1',r_2',0,\phi_1',\phi_2')^3 \le t} t^{-2} \dd t \dd
    r_1' \dd r_2' \dd \phi_1' \dd \phi_2'.
  \end{equation*}
  Using the identity
  \begin{equation*}
    \frac{1}{s} = \int_{t \ge s} \frac{\dd t}{t^2},
  \end{equation*}
  this is
  \begin{equation*}
     = \frac{1}{2} \int_{r_1',r_2' \ge 0,\ \phi_1',\phi_2' \in
      [0,2\pi[} \frac{\dd r_1' \dd r_2'
    \dd \phi_1' \dd \phi_2'}{G(1,r_1',r_2',0,\phi_1',\phi_2')^3} .
  \end{equation*}
  We turn this expression in polar coordinates back into a complex integral
  ($z_j' = \sqrt{r_j'}e^{i\phi_j'}$ with 
  $\dd z_j' = \frac 1 2 \dd r_j' \dd \phi_j'$), giving
  \begin{equation*}
    = 2 \int_{\CC^{\oplus 2}} \frac{\dd z_1' \dd z_2'}{F_{-1}(1,z_1',z_2')^3} .
  \end{equation*}
  By definition (see Lemma~\ref{lem:omega_v_parts}), this is
  $\omega_\infty^{(-1)}$.
\end{proof}

Lemma~\ref{lem:omega_v_parts}, Lemma~\ref{lem:real_density_+}, and
Lemma~\ref{lem:real_density_-} prove that
\begin{equation*}
  \omega_{\tH,\infty}(\tX(\RR)) = \omega_\infty^{(1)} + \omega_\infty^{(-1)} = \frac{3 \vol(S_1^+(e))}{4} + \frac{6 \vol(S_1^-)}{\pi}.
\end{equation*}
Combining this with \eqref{eq:limit} and
\eqref{eq:archimedean_factor}, the Tamagawa number
\eqref{def:tamagawa} is
\begin{equation*}
  \tau_\tH(\tX) = \frac{\omega_{\tH,\infty}(\tX(\RR))}{\zeta(3)^2} = \frac{\omega_\infty^{(1)} + \omega_\infty^{(-1)}}{\zeta(3)^2} = \frac{3 \vol(S_1^+(e))}{4\zeta(3)^2} + \frac{6 \vol(S_1^-)}{\pi\zeta(3)^2}.
\end{equation*}
Together with \eqref{eq:alpha}, \eqref{eq:beta}, this shows that
Peyre's constant $\theta_\tH(\tX)$ as in \eqref{eq:peyres_constant} is
the sum of $c_+$ and $c_-$ as in
Proposition~\ref{prop:asymptotic_formula}. This completes the proof of
Theorem~\ref{thm:manin-peyre}.

\bibliographystyle{alpha}

\bibliography{manin_sym-4}

\end{document}